\begin{document}
	

\title{\textbf{Solving discrete constrained problems on de Rham complex}}

\author{ Zhongjie Lu 
	\thanks{School of Mathematical Sciences, 
		University of Science and Technology of China, Hefei, Anhui 230026, China.
	Email: zhjlu@ustc.edu.cn. Research supported by NSFC grant No. 12101586.}}

\date{}
	
	\maketitle
	
	\begin{abstract}
		
		The poor and even ill conditions of the discrete constrained problems
		cause some difficulties in solving them with iterative methods.
		In this paper, 
		we transform the discrete constrained problems on de Rham complex
		to equivalent Laplace-like problems.
		This transformation not only make these constrained problems solvable,
		but also make it easy to use many existing iterative methods
		and preconditioning techniques to solving large-scale 
		discrete constrained problems.
		
	\textbf{Keywords: constrained problem, divergence-free,
		ill condition, iterative method }

	\end{abstract}


\section{Introduction}

Many systems of partial differential equations
contain constraint conditions.
For example,
when solving the second-order Maxwell equation,
we want the solution to satisfy 
the divergence-free condition $ \nabla \cdot \vu = 0 $.
To this end, 
a Lagrange multiplier is inserted into the equation
\begin{equation}\label{mixed_Maxwell}
	\begin{split}
		\nabla \times \nabla \times \vu + c \vu + \nabla p &= \vf, \\
		\nabla \cdot \vu &= 0.
	\end{split}
\end{equation}
Then the component in the solution
that dose not satisfy the condition $ \nabla \cdot \vu = 0 $
is eliminated.
The weak formulation for such problems usually
have the following form:
\begin{equation}\label{weak_form}
	\begin{split}
		a(\vu,\vv) + b(\vv,p) &= (\vf,\vv) \,\qq \forall \vv \in V, \\
		b(\vu,q)   &= 0 \qqq\qq \forall q \in Q.
	\end{split}
\end{equation}
Here, $ V $ and $ Q $ are the function spaces for $ \vu $ and $ p $, respectively.
By constructing two proper finite element spaces $ V_h $ and $ Q_h $ for the two spaces,
we obtain its discrete problem
\begin{equation}\label{discrete_weak}
	\begin{split}
		a(\vu_h,\vv) + b(\vv_h,p_h) &= (\vf,\vv_h) \qq \forall \vv_h \in V_h, \\
		b(\vu_h,q_h)   &= 0 \qqq\qq\;\, \forall q_h \in Q_h.
	\end{split}
\end{equation}
Usually this kind of discrete problems has poor condition.
This results in that 
many existing iterative methods and preconditioning techniques 
that are efficient for Laplace-like problems
are inefficient and even invalid for such problems.
With some boundary conditions 
and domains with some special shapes,
we can prove that
the terms $ b(\vu,q) $ in \eqref{weak_form} 
and $ b(q_h,\vu_h) $ in \eqref{discrete_weak} 
no longer satisfy the inf-sup condition,
but $ \vu $ in \eqref{weak_form} and $ \vu_h $ in \eqref{discrete_weak}
 are still unique.
But the algebraic system is non-invertible.
Even direct methods can not solve it.
Thing is worse 
for the constrained grad-div problem
\begin{equation}\nonumber 
	\begin{split}
		-\nabla ( \nabla \cdot \vu) + c \vu + \nabla \times  \vp &= \vf, \\
		\nabla \times \vu &= 0.
	\end{split}
\end{equation}
There is an infinite-dimensional kernel contained in 
its constraint condition $ \nabla \times \vu = 0 $.
Its corresponding algebraic system is highly ill.

A popular method to deal with such discrete problems is 
penalty method \cite{boffi2013mixed}.
If the $ \vu_h $ in \eqref{discrete_weak} is unique,
the system of the penalty method is solvable.
With the penalty parameter becoming large,
the penalty solution tends to the exact solution.
If we want more precious solution,
the penalty parameter should be larger.
However, if the penalty parameter is too large,
the condition of the system of penalty method become terrible.
Besides this, because of the limited machine precision,
the penalty parameter can be arbitrarily large.
There are also researches on the preconditioned discretizations 
for partial differential equations that can match such problems \cite{MR2769031}.
There is a comprehensive survey about 
the saddle point problems in \cite{MR2168342}.

The Maxwell operator and grad-div operator are the 
$ k=1 $ and $ k=2 $ forms of $ d^*d $ operator on 
$ \mbR^3 $ complex
\begin{equation}\nonumber 
	\begin{split}
		\xymatrix{
			0 \ar[r] & H(\textgrad) \ar[r]^{\nabla} 
			& \vH(\textcurl) \ar[r]^{\nabla \times} 
			& \vH(\textdive) \ar[r]^{\nabla \cdot} 
			& L^2 \ar[r]& 0.
		}
	\end{split}
\end{equation}
For the general de Rham complex
\begin{equation}\nonumber 
	\begin{split}
		\xymatrix{
			0 \ar[r] & V^0 \ar[r]^{d^0} 
			& V^1 \ar[r]^{d^1} 
			& \cdots \ar[r]^{d^{n-1}} 
			& V^n \ar[r]& 0,
		}
	\end{split}
\end{equation}
these constrained problems can be written in a uniform formulation
\begin{equation}\label{mixed_dd_0}
	\begin{split}
		(d^k)^* d^k \vu + c \vu + d^{k-1} \vp &= \vf, \\
		(d^{k-1})^* \vu &= \vg.
	\end{split}
\end{equation}

In this paper, we study the discrete problem of 
the constrained problem \eqref{mixed_dd_0}
in the uniform framework of de Rham complex.
Using the property of complex,
we construct several equivalent problems for 
the corresponding discrete problem \eqref{discrete_weak} 
for \eqref{mixed_dd_0}.
No mater whether the discrete system \eqref{discrete_weak}  
is invertible or not,
only if the component $ \vu_h $ is unique,
we can solve it though some well-posed problems
that we construct.
Furthermore,
the spectral distributions of the equivalent problems are Laplace-like.
It is easy to use
many existing iterative methods
and preconditioning techniques 
to solve large-scalar discrete constrained problems.

This paper is organized as follows.
In Section \ref{sec_2},
we set up the matrix constrained system and 
derive some theoretical results 
that we shall use in the later sections.
In Section \ref{sec_3} and Section \ref{sec_4},
we construct equivalent problems for the case 
$ \vg = 0 $ and $ \vg \not = 0 $ in \eqref{mixed_dd_0}, respectively.
In Section \ref{sec_5},
we study the discretization of the constrained problems on de Rham complex
and illustrate its relationship with the matrix systems 
discussed in previous sections.
In Section \ref{sec_6},
we consider some problems that are involved in solving the equivalent problems.
In Section \ref{sec_7},
we take the constrained Maxwell and grad-div problems as numerical examples
to verify the equivalent problems.
There are some conclusions in Section \ref{sec_8}.

\section{Matrix aspects of the constrained problem} \label{sec_2}

Before we consider the discretization of the constrained problems on de Rham complex,
we study their matrix form:
\begin{equation}\label{mixed_matrix}
	\begin{split}
		(\mcA + c\mcM)u + \mcB p &= \mcF, \\
		\mcB^T u &= \mcG.
	\end{split}
\end{equation}
In this section, we definite this matrix system
and derive some theoretical results 
that we shall use in the later sections.
In Section \ref{sec_5},
we will show the relation between 
the finite element discretization 
and this system.

In the system \eqref{mixed_matrix}, 
it involves the following matrices and vectors:
\begin{definition}
	Let $ N,M $ be two positive integers.
	Let
	$\mcA \in \mbC^{N\times N}$, 
	$ \mcB \in \mbC^{N\times M}$,
	$ \mcM \in \mbC^{N\times N}$,
	$ \mcF \in \mbC^{N \times 1} $,
	$ \mcG \in \mbC^{M \times 1} $,
	$ u \in \mbC^{N \times 1} $,
	$ p \in \mbC^{M \times 1} $
	and  $ c \geq 0 $ be a nonnative real number.
	Here $ \mcA $ is Hermitian positive semi-definite
	and $ \mcM $ is Hermitian positive definite.
\end{definition}
\noindent There is an assumption on $ \mcA $, $ \mcM $ and $ \mcB $.
\begin{assumption}\label{AMB0}
	$ \mcM $-complex property:
	\begin{equation}\nonumber
		\begin{split}
			\mcA\mcM^{-1}\mcB = 0.
		\end{split}
	\end{equation}
\end{assumption}
\noindent 
This is the main difference between the system \eqref{mixed_matrix}
and general systems with constraint conditions.
This property corresponds to the basic property $ d^{k}d^{k-1}=0 $ on complexes.
This is the reason that we call this property 'complex'.
When solving the system \eqref{mixed_matrix},
it also involves an extra matrix:
\begin{definition}
	$ \mcU \in \mbC^{M\times M} $ is a Hermitian positive definite matrix.
\end{definition}

We consider the Hermitian generalized eigenvalue problem
\begin{equation}\label{A_eig}
	\begin{split}
		\mcA u  = \lambda \mcM u,\\
	\end{split}
\end{equation}
According to its eigenvectors, 
the entire space $ \mbC^N $ can be divided into 
two $ \mcM $-orthogonal parts:
\begin{equation}\nonumber
	\begin{split}
		\mbC^{N} = \mbC_1 \oplus_{\mcM} \Ker \mcA.
	\end{split}
\end{equation}
Here $ \mbC_1 $ is spanned by the eigenvectors with nonzero eigenvalues of \eqref{A_eig}.
The set of nonzero eigenpairs of \eqref{A_eig} is denoted by
\begin{equation}\label{A_eigenpairs}
	\begin{split}
		\left\lbrace \left(\lambda^{(1)}_i, \mfu^{(1)}_i \right) \right\rbrace 
		_{i=1}^{\dim \mbC_1}.
	\end{split}
\end{equation}
The set 
\begin{equation}\label{base_C1}
	\begin{split}
		\left\lbrace  \mfu^{(1)}_i  \right\rbrace 
		_{i=1}^{\dim \mbC_1}.
	\end{split}
\end{equation}
can be a base of the subspace $ \mbC_1 $.
Similar to \eqref{A_eig},
for the Hermitian generalized eigenvalue problem
\begin{equation}\label{B_eig}
	\begin{split}
		\mcB\mcU\mcB^T u = \lambda \mcM u,
	\end{split}
\end{equation}
there is another  $ \mcM $-orthogonal decomposition for $ \mbC^N $:
\begin{equation}\nonumber
	\begin{split}
		\mbC^{N} = \mbC_2 \oplus_{\mcM} \Ker\mcB\mcU\mcB^T,
	\end{split}
\end{equation}
where $ \mbC_2 $ is spanned by the eigenvectors of the nonzero eigenvalues of \eqref{B_eig}.
The set of nonzero eigenpairs of \eqref{B_eig} is denoted by
\begin{equation}\label{B_pairs}
	\begin{split}
		\left\lbrace \left(\lambda^{(2)}_i, \mfu^{(2)}_i \right) \right\rbrace 
		_{i=1}^{\dim \mbC_2}.
	\end{split}
\end{equation}
The set 
\begin{equation}\label{B_base}
	\begin{split}
		\left\lbrace  \mfu^{(2)}_i  \right\rbrace 
		_{i=1}^{\dim \mbC_2}.
	\end{split}
\end{equation}
can be a base of the subspace $ \mbC_2 $.

For an eigenpair 
$   \left(\lambda^{(2)}_i, \mfu^{(2)}_i \right) $
in the set \eqref{B_pairs},
we have 
\begin{equation}\nonumber
	\begin{split}
		\frac{1}{\lambda^{(2)}_i}\mcM^{-1}\mcB \mcU \mcB^T \mfu^{(2)}_i = \mfu^{(2)}_i.
	\end{split}
\end{equation}
By Assumption \ref{AMB0}, we have
\begin{equation}\nonumber
	\begin{split}
		\mcA \mfu^{(2)}_i  = \frac{1}{\lambda^{(2)}_i}\mcM^{-1}\mcA\mcB \mcU \mcB^T \mfu^{(2)}_i
		=0,
	\end{split}
\end{equation}
and we obtain
\begin{equation}\nonumber
	\begin{split}
		\mfu^{(2)}_i \in \Ker \mcA.
	\end{split}
\end{equation}
As the set \eqref{B_base} is a base of $ \mbC_2 $,
we have
\begin{equation}\nonumber
	\begin{split}
		\mbC_2\subset \Ker \mcA.
	\end{split}
\end{equation}

We denote the intersection of $ \Ker \mcA $ and $ \Ker\mcB\mcU\mcB^T $ by
\begin{equation}\nonumber
	\begin{split}
		\mbC_0 \triangleq \Ker \mcA \cap \Ker\mcB\mcU\mcB^T.
	\end{split}
\end{equation}
The $ \mbC_0 $ is the eigenspace of the zero eigenvalue 
of the generalized eigenvalue problem
\begin{equation}\label{AB_eig}
	\begin{split}
		\left( \mcA + \mcB\mcU\mcB^T \right) u = \lambda \mcM u.
	\end{split}
\end{equation}
Then the entire space $ \mbC^N $  can be decomposed into three $ \mcM $-orthogonal parts:
\begin{equation}\label{decomp_M}
	\begin{split}
		\mbC^{N} = 
		\mbC_0 \oplus_{\mcM} \mbC_1 \oplus_{\mcM}\mbC_2.
	\end{split}
\end{equation}
Consequently, 
there is a  $ \mcM^{-1} $-orthogonal decomposition for $ \mbC^N $:
\begin{equation}\nonumber
	\begin{split}
		\mbC^{N} = \mcM \mbC_0 \oplus_{\mcM^{-1}}\mcM\mbC_1 \oplus_{\mcM^{-1}}\mcM\mbC_2.
	\end{split}
\end{equation}
The two kernels can be presented by 
\begin{equation}\label{kernelAB}
	\begin{split}
		\Ker \mcA = \mbC_0  \oplus_{\mcM}\mbC_2
		\qq \text{and} \qq
		\Ker \mcB\mcU\mcB^T = \mbC_0 \oplus_{\mcM}\mbC_1.
	\end{split}
\end{equation}

\begin{definition}\label{H_def}
	The set $ \left\lbrace \mfu^{(0)}_i \right\rbrace_{i=1}^{\dim \mbC_0} $
	is a $ \mcM $-orthogonal base of the subspace $ \mbC_0 $.
	There is $ \mfu^{(0)T}_i \mcM \mfu^{(0)}_i = 1 $ for $ i =1,\cdots, \dim\mbC_0 $.
	The matrix $ \mcH \in \mbC^{N \times \dim\mbC_0} $ is constructed by this base
	$ \mcH = \left[\mfu^{(0)}_1, \mfu^{(0)}_2, \cdots, \mfu^{(0)}_{\dim \mbC_0} \right]  $.
\end{definition}
\noindent 
The set $ \left\lbrace \mfu^{(0)}_i \right\rbrace_{i=1}^{\dim \mbC_0} $
can be gotten through 
computing the eigenvectors of zero eigenvalue in
the eigenvalue problem \eqref{AB_eig}.
We consider the generalized eigenvalue problem
\begin{equation}\label{H_eig}
	\begin{split}
		\mcM \mcH \mcH^T \mcM u = \lambda \mcM u.
	\end{split}
\end{equation}
By Definition \ref{H_def},
for a $ \mfu^{(0)}_i $ in the set
$ \left\lbrace \mfu^{(0)}_i \right\rbrace_{i=1}^{\dim \mbC_0} $,
we have
\begin{equation}\nonumber
	\begin{split}
		\mcM \mcH \mcH^T \mcM \mfu^{(0)}_i = 
		\left( u^{(0)T}_i \mcM \mfu^{(0)}_i \right)  \mcM \mfu^{(0)}_i
		= \mcM u^{(0)}_i.
	\end{split}
\end{equation}
Then the set of nonzero eigenpairs of \eqref{H_eig} is
\begin{equation}\label{H_pairs}
	\begin{split}
		\left\lbrace\left(1, \mfu^{(0)}_i \right) \right\rbrace 
		_{i=1}^{\dim \mbC_0}.
	\end{split}
\end{equation}
By the $ \mcM $-orthogonal decomposition \eqref{decomp_M},
we know
\begin{equation}\nonumber
	\begin{split}
		\Ker \mcM \mcH \mcH^T \mcM = \mbC_1 \oplus_{\mcM}\mbC_2.
	\end{split}
\end{equation}
Summarizing the matrix operators and the subspaces, 
we have the following theorem.
\begin{theorem}\label{invariant}
	\begin{equation}\nonumber 
		\begin{split}
			\mcA \mbC_0 &= \{0\}, 
			\qqq \qqq\qqq \mcA \mbC_2 = \mcM \mbC_1,
			\qqq\qqq \mcA \mbC_2 = \{0\},\\
			\mcB\mcU\mcB^T \mbC_0 &= \{0\}, 
			\qqq\qqq \mcB\mcU\mcB^T \mbC_2 = \{0\},
			\qqq\qq \mcB\mcU\mcB^T \mbC_2 = \mcM \mbC_2,\\
			\mcM \mcH \mcH^T \mcM \mbC_0 &= \mcM \mbC_0, 
			\qq\, \mcM \mcH \mcH^T \mcM \mbC_2 = \{0\},
			\qq \mcM \mcH \mcH^T \mcM \mbC_2 = \{0\}.
		\end{split}
	\end{equation}
	
\end{theorem}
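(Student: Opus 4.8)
The plan is to read off the nine identities, row by row, from the $\mcM$-orthogonal decomposition \eqref{decomp_M}, the kernel identities \eqref{kernelAB}, the computation $\Ker \mcM\mcH\mcH^T\mcM = \mbC_1 \oplus_{\mcM} \mbC_2$ made just above, and the lists of nonzero eigenpairs \eqref{base_C1}, \eqref{B_base}, \eqref{H_pairs}. Six of the nine equalities assert that an operator annihilates one of the three subspaces, and each of these reduces to the observation that the subspace in question already lies in the appropriate kernel: $\mcA \mbC_0 = \{0\}$ and $\mcA \mbC_2 = \{0\}$ because $\mbC_0, \mbC_2 \subseteq \Ker \mcA$ by \eqref{kernelAB} (the inclusion $\mbC_2 \subseteq \Ker\mcA$ being the one established explicitly via Assumption \ref{AMB0} before $\mbC_0$ was introduced); $\mcB\mcU\mcB^T \mbC_0 = \{0\}$ and $\mcB\mcU\mcB^T \mbC_1 = \{0\}$ because $\mbC_0, \mbC_1 \subseteq \Ker \mcB\mcU\mcB^T$ by \eqref{kernelAB}; and $\mcM\mcH\mcH^T\mcM \mbC_1 = \{0\}$, $\mcM\mcH\mcH^T\mcM \mbC_2 = \{0\}$ because $\mbC_1, \mbC_2 \subseteq \Ker \mcM\mcH\mcH^T\mcM$.

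For the three remaining ``image'' identities I would expand in the eigenbases. Writing an arbitrary $u \in \mbC_1$ as $u = \sum_i \alpha_i \mfu^{(1)}_i$ in the basis \eqref{base_C1} gives $\mcA u = \sum_i \alpha_i \lambda^{(1)}_i \mcM \mfu^{(1)}_i \in \mcM \mbC_1$, hence $\mcA \mbC_1 \subseteq \mcM \mbC_1$; conversely each $\mcM \mfu^{(1)}_i = (\lambda^{(1)}_i)^{-1} \mcA \mfu^{(1)}_i$ lies in $\mcA \mbC_1$ since $\lambda^{(1)}_i \neq 0$, so $\mcM \mbC_1 \subseteq \mcA \mbC_1$ and the two subspaces coincide. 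The identity $\mcB\mcU\mcB^T \mbC_2 = \mcM \mbC_2$ is obtained verbatim from the nonzero eigenpairs \eqref{B_base}. Finally, $\mcM\mcH\mcH^T\mcM \mbC_0 = \mcM \mbC_0$ follows from \eqref{H_pairs}: each basis vector of $\mbC_0$ satisfies $\mcM\mcH\mcH^T\mcM \mfu^{(0)}_i = \mcM \mfu^{(0)}_i$ (as already computed when deriving \eqref{H_pairs}), so $\mcM\mcH\mcH^T\mcM$ restricted to $\mbC_0$ coincides with $\mcM$, whose image on $\mbC_0$ is exactly $\mcM \mbC_0$.

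I do not expect a genuine obstacle here: the statement is a bookkeeping summary of the facts assembled in this section. The only point warranting a word of care is that the three image identities are equalities, not merely inclusions; this uses the nonsingularity of $\mcM$ together with the fact that the eigenvalues $\lambda^{(1)}_i$, $\lambda^{(2)}_i$ are nonzero (and, in the $\mcH$ case, equal to $1$). As a consistency check one may note that $\mcA$ is injective on $\mbC_1$, since it sends the basis \eqref{base_C1} to the linearly independent family $\{\lambda^{(1)}_i \mcM \mfu^{(1)}_i\}$, so $\dim \mcA\mbC_1 = \dim \mbC_1 = \dim \mcM\mbC_1$ and no collapse occurs; the same remark applies to $\mcB\mcU\mcB^T$ on $\mbC_2$.
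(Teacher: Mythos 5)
Your proof is correct and matches the paper's intent exactly: the paper offers no argument for Theorem \ref{invariant} beyond the phrase ``summarizing the matrix operators and the subspaces,'' and your row-by-row reading of the nine identities from \eqref{kernelAB}, the kernel computation for $\mcM\mcH\mcH^T\mcM$, and the nonzero eigenpair sets \eqref{A_eigenpairs}, \eqref{B_pairs}, \eqref{H_pairs} is precisely the bookkeeping that justifies it. You also correctly read the statement as the intended $3\times 3$ table over $\mbC_0,\mbC_1,\mbC_2$ despite the repeated $\mbC_2$ subscripts in the paper's display, and your care about the image identities being equalities (via the nonvanishing of the eigenvalues) is the only point that genuinely needs saying.
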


\begin{theorem}\label{unique_solution}
	Let $ c \geq 0 $.
	For $ f^{(1)} \in \mbC_1 $,
	there a unique solution $ u^{(1)} $ in the subspace $ \mbC_1 $
	satisfying $ \mcA u^{(1)} + c \mcM u^{(1)} = \mcM f^{(1)} $.
	For $ f^{(2)} \in \mbC_2 $,
	there a unique solution $ u^{(2)} $ in the subspace $ \mbC_2 $
	satisfying $ \mcB^T\mcU\mcB u^{(2)} + c \mcM u^{(2)} = \mcM f^{(2)} $.
	For $ f^{(0)} \in \mbC_0 $,
	there a unique solution $ u^{(0)} $ in the subspace $ \mbC_0 $
	satisfying $ \mcM \mcH \mcH^T \mcM u^{(0)}  = \mcM f^{(0)} $.
\end{theorem}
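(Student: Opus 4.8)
The plan is to treat all three equations by one common strategy: each of them lives on one of the $\mcM$-orthogonal invariant subspaces $\mbC_1$, $\mbC_2$, $\mbC_0$ of the decomposition \eqref{decomp_M}, and on each of these subspaces the relevant operator has already been diagonalized in the discussion above. So it suffices to expand the data and the unknown in the appropriate $\mcM$-orthogonal eigenbasis, match coefficients, and check that the resulting diagonal entries never vanish. First I would note that, since $\mcM$ is Hermitian positive definite, the eigenvectors of each of the Hermitian generalized eigenvalue problems \eqref{A_eig}, \eqref{B_eig}, \eqref{H_eig} can be chosen $\mcM$-orthogonal even in the presence of multiplicities; this is what lets the base \eqref{base_C1}, the base \eqref{B_base}, and the base in Definition \ref{H_def} be taken $\mcM$-orthogonal without loss of generality.

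For the first equation, write $f^{(1)} = \sum_{i=1}^{\dim\mbC_1}\alpha_i\,\mfu^{(1)}_i$ in the base \eqref{base_C1} and look for $u^{(1)} = \sum_{i=1}^{\dim\mbC_1}\beta_i\,\mfu^{(1)}_i$ in $\mbC_1$. Using $\mcA\mfu^{(1)}_i = \lambda^{(1)}_i\mcM\mfu^{(1)}_i$ from \eqref{A_eigenpairs}, the equation $\mcA u^{(1)} + c\mcM u^{(1)} = \mcM f^{(1)}$ becomes $\sum_i\beta_i(\lambda^{(1)}_i + c)\,\mcM\mfu^{(1)}_i = \sum_i\alpha_i\,\mcM\mfu^{(1)}_i$, and testing against each base vector (using $\mcM$-orthogonality and invertibility of $\mcM$) gives $\beta_i(\lambda^{(1)}_i + c) = \alpha_i$. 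Since each $\lambda^{(1)}_i$ is a nonzero eigenvalue of the Hermitian positive semi-definite pencil $(\mcA,\mcM)$ it is strictly positive, and $c\geq 0$, so $\lambda^{(1)}_i + c > 0$; hence $\beta_i = \alpha_i/(\lambda^{(1)}_i + c)$ is uniquely determined, which gives existence and uniqueness in $\mbC_1$. The second equation is handled identically, with \eqref{B_pairs} in place of \eqref{A_eigenpairs}: expanding in the base \eqref{B_base} and using $\mcB\mcU\mcB^T\mfu^{(2)}_i = \lambda^{(2)}_i\mcM\mfu^{(2)}_i$ with $\lambda^{(2)}_i > 0$ yields the unique coefficients $\beta_i = \alpha_i/(\lambda^{(2)}_i + c)$. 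For the third equation, \eqref{H_pairs} shows that every vector of the base in Definition \ref{H_def} satisfies $\mcM\mcH\mcH^T\mcM\,\mfu^{(0)}_i = \mcM\mfu^{(0)}_i$; expanding $f^{(0)}$ and $u^{(0)}$ in this base, coefficient matching gives $\beta_i = \alpha_i$, so in fact $u^{(0)} = f^{(0)}$ is the unique solution in $\mbC_0$.

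The only genuinely delicate point is verifying that the shifted eigenvalues $\lambda^{(1)}_i + c$ and $\lambda^{(2)}_i + c$ stay strictly positive in the borderline case $c = 0$. This is exactly where it matters that $\mbC_1$ and $\mbC_2$ are \emph{spanned by eigenvectors of the nonzero (hence strictly positive) eigenvalues} of \eqref{A_eig} and \eqref{B_eig}, rather than by the full spaces. As a coordinate-free cross-check I would instead argue by injectivity plus a dimension count: if $u^{(1)}\in\mbC_1$ and $\mcA u^{(1)} + c\mcM u^{(1)} = 0$, pairing with $u^{(1)}$ gives $u^{(1)*}\mcA u^{(1)} + c\,u^{(1)*}\mcM u^{(1)} = 0$ with both terms nonnegative, which forces $u^{(1)}\in\Ker\mcA$ when $c=0$; but $\mbC_1\cap\Ker\mcA = \{0\}$ by \eqref{kernelAB}, so $u^{(1)} = 0$, and since the map sends $\mbC_1$ into $\mcM\mbC_1$ with $\dim\mcM\mbC_1 = \dim\mbC_1$, injectivity upgrades to bijectivity. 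The same argument applies verbatim to $\mbC_2$ with $\Ker\mcB\mcU\mcB^T$ in place of $\Ker\mcA$, and trivially to $\mbC_0$ via \eqref{H_pairs}.
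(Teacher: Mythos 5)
Your proposal is correct and follows essentially the same route as the paper: expand $f$ and $u$ in the $\mcM$-orthogonal eigenbasis of the relevant pencil, match coefficients, and use that the nonzero eigenvalues of a Hermitian positive semi-definite pencil are strictly positive so that $\lambda_i + c > 0$. You also correctly read the second equation's operator as $\mcB\mcU\mcB^T$ (the statement's $\mcB^T\mcU\mcB$ is dimensionally inconsistent and clearly a typo), and your injectivity-plus-dimension-count cross-check, while not in the paper, is a sound supplement.
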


\begin{proof}
	As $ \mcA $ is positive semi-definite, 
	the eigenvalues in the nonzero
	eigenpair set \eqref{A_eigenpairs}  are all positive.
	As $ f^{(1)} \in \mbC_1 $,
	using the base \eqref{base_C1},
	it can be expanded as
	\begin{equation}\label{f1_decomp1}
		\begin{split}
			f^{(1)} = \sum_{i=1}^{\dim \mbC_1} f^{(1)}_i \mfu^{(1)}_i.
		\end{split}
	\end{equation}
	Here $ f^{(1)}_i \in \mbC $ is the coefficient of each eigenvector 
	in the base \eqref{base_C1}.
	Similarly, $ u^{(1)} \in \mbC_1 $ can be expanded as
	\begin{equation}\label{u1_decomp1}
		\begin{split}
			u^{(1)} = \sum_{i=1}^{\dim \mbC_1} u^{(1)}_i \mfu^{(1)}_i,
		\end{split}
	\end{equation}
	where $ u^{(1)}_i \in \mbC $ is the coefficient to be computed.
	
	Putting the expansions
	\eqref{f1_decomp1} and \eqref{u1_decomp1} into the equation,
	we have
	\begin{equation}\nonumber
		\begin{split}
			\left( \mcA + c\mcM\right) \sum_{i=1}^{\dim \mbC_1} u^{(1)}_i \mfu^{(1)}_i = \mcM \sum_{i=1}^{\dim \mbC_1} f^{(1)}_i \mfu^{(1)}_i.
		\end{split}
	\end{equation}
	By the equation $ \mcA \mfu^{(1)}_i =\lambda^{(1)}_i\mcM \mfu^{(1)}_i $, 
	we have
	\begin{equation}\nonumber
		\begin{split}
			\sum_{i=1}^{\dim \mbC_1} 
			\left( \lambda^{(1)}_i + c\right)u^{(1)}_i \mcM\mfu^{(1)}_i =  \sum_{i=1}^{\dim \mbC_1}  f^{(1)}_i \mcM\mfu^{(1)}_i.
		\end{split}
	\end{equation}
	Comparing the coefficients on both sides,
	we obtain
	\begin{equation}\nonumber
		\begin{split}
			\left(\lambda^{(1)}_i + c \right) u^{(1)}_i = f^{(1)}_i \qq \text{for each } i.
		\end{split}
	\end{equation}
	Then we have
	\begin{equation}\nonumber
		\begin{split}
			u^{(1)}_i = \frac{f^{(1)}_i}{ \lambda^{(1)}_i + c }
		\end{split}
	\end{equation}
	and we obtain the unique solution $ u^{(1)} \in \mbC_1 $ 
	for the equation $ \mcA u^{(1)} + c \mcM u^{(1)} = \mcM f^{(1)} $. 
	
	The proofs for the next two results are similar.
\end{proof}

\begin{theorem}\label{kerBandkerBB}
	$ \Ker\mcB^T = \Ker\mcB\mcU\mcB^T = \mbC_0 \oplus_{\mcM}\mbC_1$.
\end{theorem}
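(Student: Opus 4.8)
The plan is to split the claimed chain of identities into two independent pieces: first the equality $\Ker\mcB^T = \Ker\mcB\mcU\mcB^T$, which is the genuinely new statement here, and then the description $\Ker\mcB\mcU\mcB^T = \mbC_0\oplus_{\mcM}\mbC_1$, which has already been recorded as the second formula in \eqref{kernelAB} and can simply be quoted.

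For the first piece I would argue by the usual quadratic-form trick. The inclusion $\Ker\mcB^T\subseteq\Ker\mcB\mcU\mcB^T$ is immediate. For the reverse inclusion, I take $u$ with $\mcB\mcU\mcB^T u = 0$, set $v = \mcB^T u$, and pair the identity with $u$ to get $v^*\mcU v = u^*\mcB\mcU\mcB^T u = 0$; since $\mcU$ is Hermitian positive definite this forces $v = 0$, i.e. $u\in\Ker\mcB^T$. Combining the two inclusions with \eqref{kernelAB} then gives the theorem.

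If one prefers a self-contained derivation of $\Ker\mcB\mcU\mcB^T = \mbC_0\oplus_{\mcM}\mbC_1$ rather than quoting \eqref{kernelAB}, I would read it off the $\mcM$-orthogonal decomposition \eqref{decomp_M}: $\mbC_0\subseteq\Ker\mcB\mcU\mcB^T$ holds by the definition of $\mbC_0$; each eigenvector in the base \eqref{base_C1} satisfies $\mfu^{(1)}_i = (\lambda^{(1)}_i)^{-1}\mcM^{-1}\mcA\mfu^{(1)}_i$, so $\mcB^T\mfu^{(1)}_i = (\lambda^{(1)}_i)^{-1}(\mcB^T\mcM^{-1}\mcA)\mfu^{(1)}_i = 0$ because $\mcB^T\mcM^{-1}\mcA$ is the conjugate transpose of $\mcA\mcM^{-1}\mcB = 0$ from Assumption \ref{AMB0} together with the Hermitian symmetry of $\mcA$ and $\mcM$, whence $\mbC_1\subseteq\Ker\mcB^T$; and $\mbC_2\cap\Ker\mcB\mcU\mcB^T = \{0\}$ since $\mbC_2$ is spanned by eigenvectors of nonzero eigenvalues of \eqref{B_eig}. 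A dimension count against $\mbC^N = \mbC_0\oplus_{\mcM}\mbC_1\oplus_{\mcM}\mbC_2$ then closes it. There is no hard step in any of this; the only thing to watch is the conjugation bookkeeping in $v^*\mcU v\ge 0$ and in taking the conjugate transpose of Assumption \ref{AMB0}, both of which rely solely on $\mcA$, $\mcM$, $\mcU$ being Hermitian and on the $\mcM$-complex property.
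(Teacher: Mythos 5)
Your proposal is correct and matches the paper's own proof: the paper establishes $\Ker\mcB^T = \Ker\mcB\mcU\mcB^T$ by exactly the same quadratic-form argument (pairing $\mcB\mcU\mcB^T u = 0$ with $u$ and invoking positive definiteness of $\mcU$), and the identification with $\mbC_0 \oplus_{\mcM} \mbC_1$ is likewise just quoted from \eqref{kernelAB}. Your optional self-contained derivation of the second equality via the conjugate transpose of Assumption \ref{AMB0} is also sound, though unnecessary given the earlier material.
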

\begin{proof}
	For $ u \in \Ker\mcB^T $,
	we have  $ \mcB\mcU\mcB^T u = \mcB\mcU(\mcB^T u) = 0 $
	and then $ u \in \Ker\mcB\mcU\mcB^T $.
	
	For $ u \in \Ker\mcB\mcU\mcB^T $,  we have
	\begin{equation}\nonumber
		\begin{split}
			\mcB\mcU\mcB^T u = 0 
			\Longrightarrow u^T \mcB\mcU\mcB^T u = 0
			\Longrightarrow (\mcB^T u )^T\mcU(\mcB^T u) = 0.
		\end{split}
	\end{equation}
	As $ \mcU $ is Hermitian positive definite,
	we have $ \mcB^T u = 0 $ and then $ u \in \Ker\mcB^T $.
\end{proof}

\begin{theorem}\label{CM_decomp}
	$ \mbC^M = \Ker \mcB \oplus \IM \mcB^T $.
\end{theorem}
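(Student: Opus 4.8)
The statement is the standard algebraic splitting of $\mbC^{M}$, the domain of $\mcB$, into $\Ker\mcB$ and $\IM\mcB^{T}$, and the plan is to obtain it from a trivial–intersection argument together with a dimension count. Set $r=\mathrm{rank}\,\mcB$. Since $\mcB\colon\mbC^{M}\to\mbC^{N}$, the rank--nullity theorem gives $\dim\Ker\mcB=M-r$, and $\dim\IM\mcB^{T}=\mathrm{rank}\,\mcB^{T}=r$ because transposition preserves rank. Hence $\dim\Ker\mcB+\dim\IM\mcB^{T}=M$, so once I show $\Ker\mcB\cap\IM\mcB^{T}=\{0\}$, the subspace $\Ker\mcB+\IM\mcB^{T}$ has dimension $M$, equals $\mbC^{M}$, and the sum is direct.

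For the intersection, I would take $x\in\Ker\mcB\cap\IM\mcB^{T}$ and write $x=\mcB^{T}y$ with $y\in\mbC^{N}$. From $\mcB x=0$ I get $\mcB\mcB^{T}y=0$, i.e.\ $y\in\Ker\mcB\mcB^{T}$. Now I apply Theorem~\ref{kerBandkerBB} with the Hermitian positive definite matrix $\mcU$ chosen to be the identity $I_{M}\in\mbC^{M\times M}$ (admissible, since that theorem holds for every such $\mcU$): it gives $\Ker\mcB\mcB^{T}=\Ker\mcB^{T}$. Therefore $\mcB^{T}y=0$, that is $x=0$, which establishes the claim.

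A constructive variant sidesteps the explicit dimension count: from $\Ker\mcB\mcB^{T}=\Ker\mcB^{T}$ one gets $\mathrm{rank}\,\mcB\mcB^{T}=\mathrm{rank}\,\mcB^{T}=r$, hence $\IM\mcB\mcB^{T}=\IM\mcB$ (the former lies in the latter and has the same dimension); then for arbitrary $x\in\mbC^{M}$ we have $\mcB x\in\IM\mcB=\IM\mcB\mcB^{T}$, so $\mcB x=\mcB\mcB^{T}y$ for some $y$, and $x=(x-\mcB^{T}y)+\mcB^{T}y$ splits $x$ into a part in $\Ker\mcB$ and a part in $\IM\mcB^{T}$; combined with the trivial intersection this again gives the direct sum.

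This is essentially routine linear algebra, and the only step that genuinely uses the hypotheses is the implication $\mcB\mcB^{T}y=0\Rightarrow\mcB^{T}y=0$, equivalently $\Ker\mcB\mcB^{T}=\Ker\mcB^{T}$, which is Theorem~\ref{kerBandkerBB} specialised to $\mcU=I$ and is where the positivity of the pairing $(\mcB^{T}u)^{T}\mcU(\mcB^{T}u)$ enters. Apart from that, the only thing to watch is the bookkeeping of which of $\mbC^{M}$ and $\mbC^{N}$ is the domain and which the codomain of $\mcB$ and $\mcB^{T}$, so that the kernels and images are compared inside the correct space.
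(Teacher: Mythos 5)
Your proof is correct, but it follows a different route from the paper's. The paper argues via orthogonal complements: it starts from $\mbC^M=\IM\mcB^T\oplus\left(\IM\mcB^T\right)^\perp$ and shows $\Ker\mcB=\left(\IM\mcB^T\right)^\perp$ by testing $(\mcB p,v)=(p,\mcB^T v)$ against all $v$. You instead combine rank--nullity with a trivial-intersection argument, where the intersection step is delegated to Theorem~\ref{kerBandkerBB} specialised to $\mcU=I$ (a legitimate specialisation, since that proof works for any Hermitian positive definite $\mcU$ and you only need the first equality $\Ker\mcB\mcB^T=\Ker\mcB^T$, not the identification with $\mbC_0\oplus_{\mcM}\mbC_1$). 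What the paper's route buys is slightly more than the stated claim: it shows the two summands are mutually \emph{orthogonal}, i.e.\ $\Ker\mcB\perp\IM\mcB^T$, and this orthogonality is invoked later in the proof of Theorem~\ref{BTGU}; your argument yields only the direct sum, so if one wanted the orthogonality one would have to add the one-line computation $(\mcB^T v,q)=(v,\mcB q)=0$ for $q\in\Ker\mcB$. Your constructive variant, which deduces $\IM\mcB\mcB^T=\IM\mcB$ and splits $x=(x-\mcB^T y)+\mcB^T y$, is also sound and avoids the explicit dimension count. One shared caveat, inherited from the paper rather than introduced by you: both your intersection step and the paper's Theorem~\ref{kerBandkerBB} rest on $z^T\mcU z=0\Rightarrow z=0$, which over $\mbC$ requires reading $T$ as the conjugate transpose (or taking the matrices to be real, as they are in the finite element setting); this is a convention of the paper, not a gap in your argument.
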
 

\begin{proof}
	
	There is an orthogonal decomposition for $ \mbC^M $:
	\begin{equation}\label{CM_decomp_1}
		\begin{split}
			\mbC^M = \IM \mcB^T \oplus \left( \IM \mcB^T \right) ^\perp.
		\end{split}
	\end{equation}
	
	For $ p \in \left( \IM \mcB^T \right) ^\perp $,
	we have 
	\begin{equation}\nonumber 
		\begin{split}
			(\mcB p , v) = (p , \mcB^T v) = 0 \qq \text{for any  } v\in \mbC^N.
		\end{split}
	\end{equation}
	Here we use $ \mcB^T v \in \IM \mcB^T $ for any $ v \in \mbC^N $.
	Then we have $ \mcB p = 0 $ and $ \left( \IM \mcB^T \right) ^\perp \subset \Ker \mcB $.

	For $ p\in \IM \mcB^T $,
	there is a $ v \in \mbC^N $ such that 
	$ p = \mcB^T v $.
	For $ q\in \Ker \mcB $,
	we have
	\begin{equation}\nonumber 
		\begin{split}
			(p,q)= (\mcB^T v,p) = ( v, \mcB q) = 0.
		\end{split}
	\end{equation}
	Then we have $ \Ker \mcB \perp \IM \mcB^T $.
	By the decomposition \eqref{CM_decomp_1},
	we have $ \Ker \mcB \subset \left( \IM \mcB^T \right) ^\perp $.
	
	Then we have  $ \Ker \mcB = \left( \IM \mcB^T \right) ^\perp $.
	By the decomposition \eqref{CM_decomp_1},
	we obtain the conclusion.
\end{proof}

With the similar proof, we have the next theorem.
\begin{theorem}\label{CN_decomp}
	$ \mbC^N = \Ker \mcB^T \oplus \IM \mcB $.
\end{theorem}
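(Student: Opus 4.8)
The plan is to imitate verbatim the argument used for Theorem \ref{CM_decomp}, but with the roles of $\mcB$ and $\mcB^T$ interchanged, working now in $\mbC^N$ rather than $\mbC^M$. The starting point is the standard orthogonal decomposition
\begin{equation}\nonumber
	\begin{split}
		\mbC^N = \IM \mcB \oplus \left( \IM \mcB \right)^\perp,
	\end{split}
\end{equation}
and the whole task reduces to identifying the orthogonal complement $\left( \IM \mcB \right)^\perp$ with $\Ker \mcB^T$.

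First I would show the inclusion $\left( \IM \mcB \right)^\perp \subset \Ker \mcB^T$. For $u \in \left( \IM \mcB \right)^\perp$ and any $q \in \mbC^M$ we have $\mcB q \in \IM \mcB$, so $(\mcB^T u, q) = (u, \mcB q) = 0$; since $q$ is arbitrary this forces $\mcB^T u = 0$, i.e. $u \in \Ker \mcB^T$. Next I would show the reverse-direction orthogonality $\Ker \mcB^T \perp \IM \mcB$: for $u \in \Ker \mcB^T$ and $v \in \IM \mcB$, write $v = \mcB w$ with $w \in \mbC^M$; then $(u, v) = (u, \mcB w) = (\mcB^T u, w) = 0$. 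Hence $\Ker \mcB^T \subset \left( \IM \mcB \right)^\perp$.

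Combining the two inclusions gives $\Ker \mcB^T = \left( \IM \mcB \right)^\perp$, and substituting into the orthogonal decomposition above yields $\mbC^N = \Ker \mcB^T \oplus \IM \mcB$, which is the claim. I do not anticipate any genuine obstacle here: the argument is purely linear-algebraic and the only point requiring a little care is the direction of the adjoint identity $(u,\mcB w) = (\mcB^T u, w)$ (using the Hermitian inner product, so one should be mindful of conjugation conventions, exactly as in the proof of Theorem \ref{CM_decomp}). Assumption \ref{AMB0} and the matrices $\mcA$, $\mcM$, $\mcU$ play no role in this particular statement.
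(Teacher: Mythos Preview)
Your proposal is correct and is precisely the approach the paper takes: the paper simply states ``With the similar proof, we have the following theorem,'' referring to the proof of Theorem~\ref{CM_decomp}, and your argument carries out that role-swap of $\mcB$ and $\mcB^T$ exactly as intended. No additional ingredients (such as Assumption~\ref{AMB0} or the matrices $\mcA$, $\mcM$, $\mcU$) are needed, just as you observed.
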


\begin{theorem}\label{ImB}
	$ \mbC_2 = \IM \mcM^{-1}\mcB $ and	$\mcM\mbC_2 = \IM \mcB $.
\end{theorem}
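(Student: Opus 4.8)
The plan is to prove the second identity $\mcM\mbC_2 = \IM\mcB$ first, and then deduce $\mbC_2 = \IM\mcM^{-1}\mcB$ simply by applying $\mcM^{-1}$ to both sides: since $\mcM$ is invertible, $\mcM^{-1}\IM\mcB = \{\mcM^{-1}\mcB x : x\in\mbC^M\} = \IM\mcM^{-1}\mcB$, so the two statements are equivalent.

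For the inclusion $\mcM\mbC_2 \subset \IM\mcB$ I would use the defining eigenvalue equation of the basis \eqref{B_base}. For each $i$ we have $\mcB\mcU\mcB^T\mfu^{(2)}_i = \lambda^{(2)}_i \mcM\mfu^{(2)}_i$ with $\lambda^{(2)}_i\neq 0$, hence
$\mcM\mfu^{(2)}_i = \mcB\bigl(\frac{1}{\lambda^{(2)}_i}\mcU\mcB^T\mfu^{(2)}_i\bigr)\in\IM\mcB$; since these vectors span $\mbC_2$, we get $\mcM\mbC_2\subset\IM\mcB$.

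For the reverse inclusion I would do a dimension count. By Theorem \ref{CN_decomp}, $\dim\IM\mcB = N-\dim\Ker\mcB^T$. By Theorem \ref{kerBandkerBB} the kernel $\Ker\mcB^T = \mbC_0\oplus_\mcM\mbC_1$, so $\dim\Ker\mcB^T = \dim\mbC_0+\dim\mbC_1$, and then the $\mcM$-orthogonal decomposition \eqref{decomp_M} gives $\dim\IM\mcB = N-\dim\mbC_0-\dim\mbC_1 = \dim\mbC_2$. As $\mcM$ is invertible, $\dim\mcM\mbC_2 = \dim\mbC_2 = \dim\IM\mcB$, so the inclusion of the previous paragraph is actually an equality, and we conclude $\mcM\mbC_2=\IM\mcB$. (An alternative that avoids counting dimensions: $\mcM\mbC_2$ is the $\mcM^{-1}$-orthogonal complement of $\mcM(\mbC_0\oplus_\mcM\mbC_1)=\mcM\Ker\mcB^T$ inside $\mbC^N$, and a one-line computation shows this complement is $(\Ker\mcB^T)^\perp$, which by the argument used for Theorem \ref{CN_decomp} is precisely $\IM\mcB$.) No step here is a genuine obstacle; the only thing to watch is bookkeeping of which inner product — standard, $\mcM$, or $\mcM^{-1}$ — each orthogonality statement refers to.
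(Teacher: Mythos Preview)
Your proof is correct. The paper takes a slightly different (but closely related) route: it starts from Theorem~\ref{CN_decomp}, rewrites the standard orthogonal splitting $\mbC^N=\Ker\mcB^T\oplus\IM\mcB$ as the $\mcM$-orthogonal splitting $\mbC^N=\Ker\mcB^T\oplus_{\mcM}\IM\mcM^{-1}\mcB$ (the key observation being $\langle v,\mcM^{-1}w\rangle_{\mcM}=\langle v,w\rangle$), and then compares this with the decomposition \eqref{decomp_M} using Theorem~\ref{kerBandkerBB} to identify the common piece $\Ker\mcB^T=\mbC_0\oplus_{\mcM}\mbC_1$. Your main argument instead proves one inclusion explicitly from the eigenvector equation and then matches dimensions; this is a perfectly good organization, and in fact your parenthetical ``alternative'' is essentially the paper's argument phrased in the $\mcM^{-1}$ inner product rather than the $\mcM$ inner product. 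Both approaches rely on the same three ingredients (Theorems~\ref{CN_decomp}, \ref{kerBandkerBB}, and \eqref{decomp_M}); neither is materially shorter or more general than the other.
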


\begin{proof}
	By Theorem \ref{CN_decomp}, we have
	\begin{equation}\label{CN_decomp_2}
		\begin{split}
			\mbC^N = \Ker \mcB^T \oplus \IM \mcB
			= \Ker \mcB^T \oplus_{\mcM} \mcM^{-1}\IM \mcB
			= \Ker \mcB^T \oplus_{\mcM} \IM \mcM^{-1}\mcB.
		\end{split}
	\end{equation}
	By Theorem \ref{kerBandkerBB}, we have
	\begin{equation}\nonumber
		\begin{split}
			\Ker\mcB^T = \Ker\mcB\mcU\mcB^T = \mbC_0 \oplus_{\mcM}\mbC_1.
		\end{split}
	\end{equation}
	By the $ \mcM $-decomposition for $ \mbC^N $ \eqref{decomp_M} and \eqref{CN_decomp_2},
	we obtain $ \mbC_2 = \IM \mcM^{-1}\mcB $.

	The second result $\mcM\mbC_2 = \IM \mcB $ is equivalent to the first one.
\end{proof}

\begin{theorem}\label{BTGU}
	For $ \mcG \in \IM \mcB^T $,	
	if there is a $ u_g\in\mbC^N $ such that $ \mcB \mcU \mcB^T u_g = \mcB \mcU \mcG $,
	then $ \mcB^T u_g = \mcG $.
\end{theorem}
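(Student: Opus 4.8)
The plan is to reduce the statement to showing that a single auxiliary vector vanishes. First I would set $z := \mcB^T u - \mcG \in \mbC^M$, so that the desired conclusion $\mcB^T u = \mcG$ is exactly $z = 0$. With this notation the hypothesis $\mcB\mcU\mcB^T u = \mcB\mcU\mcG$ rewrites as $\mcB\mcU z = 0$, i.e. $\mcU z \in \Ker\mcB$.

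Next I would record the second structural fact about $z$: since $\mcB^T u \in \IM\mcB^T$ trivially and $\mcG \in \IM\mcB^T$ by hypothesis, and $\IM\mcB^T$ is a linear subspace, we get $z \in \IM\mcB^T$. Thus $z$ lies in $\IM\mcB^T$ while $\mcU z$ lies in $\Ker\mcB$.

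Finally I would pair these two facts. By Theorem~\ref{CM_decomp} — more precisely by the identity $\Ker\mcB = (\IM\mcB^T)^\perp$ established in its proof — the subspaces $\Ker\mcB$ and $\IM\mcB^T$ are orthogonal in $\mbC^M$, so $(\mcU z, z) = 0$. Since $\mcU$ is Hermitian positive definite, $(\mcU z, z) = 0$ forces $z = 0$, hence $\mcB^T u = \mcG$. (Alternatively, writing $z = \mcB^T v$ for some $v \in \mbC^N$, one computes $z^T \mcU z = v^T \mcB\mcU z = 0$ directly, which avoids invoking Theorem~\ref{CM_decomp}.)

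I do not expect a genuine obstacle here; the one point worth flagging is that the hypothesis $\mcG \in \IM\mcB^T$ is essential and must be used. Without it, $z$ would only satisfy $\mcU z \in \Ker\mcB$, which does not imply $z = 0$ when $\mcB$ fails to be injective; it is the conjunction of $z \in \IM\mcB^T$, the orthogonality $\mcU z \perp z$, and the positive definiteness of $\mcU$ that closes the argument.
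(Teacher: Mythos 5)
Your proof is correct and is essentially identical to the paper's: the paper sets $\tilde\mcG = \mcB^T u$ and shows $\mcG - \tilde\mcG \in \IM\mcB^T$ with $\mcU(\mcG - \tilde\mcG) \in \Ker\mcB$, then uses the orthogonality $\Ker\mcB \perp \IM\mcB^T$ from Theorem~\ref{CM_decomp} and the positive definiteness of $\mcU$ to conclude, which is exactly your argument with $z = -(\mcG - \tilde\mcG)$. Your parenthetical alternative (writing $z = \mcB^T v$ and computing $z^T\mcU z = v^T\mcB\mcU z = 0$ directly) is a slight streamlining, but the substance is the same.
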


\begin{proof}
	Letting $ \tilde \mcG = \mcB^T u_g  $,
	then we have $ \tilde \mcG \in \IM \mcB^T $
	and $ \mcB \mcU \mcB^T u_g = \mcB \mcU \tilde\mcG $.
	Consequently, we have $ \mcG - \tilde\mcG \in \IM \mcB^T $
	and $ \mcB \mcU (\mcG - \tilde\mcG) = 0 $.
	Then $ \mcU (\mcG - \tilde\mcG) \in \Ker \mcB $.
	As $ \Ker \mcB \perp \IM \mcB^T $ by Theorem \ref{CM_decomp},
	we have $ (\mcG - \tilde\mcG)^T \mcU (\mcG - \tilde\mcG) = 0 $.
	As $ \mcU $ is a Hermitian positive definite matrix,
	we have $ \mcG - \tilde\mcG = 0 $.
\end{proof}


\section{ The case $ \mcG = 0 $} \label{sec_3}

In this section, we consider the solution of \eqref{mixed_matrix} in the case $ \mcG = 0 $:
\begin{equation}\label{mixed_matrix_g0}
	\begin{split}
		(\mcA + c\mcM)u + \mcB p &= \mcF,\\
		\mcB^T u &= 0.
	\end{split}
\end{equation}

\subsection{The case $ \dim \mbC_0 = 0 $}

In this case, the subspace $ \mbC_0 $ vanishes.
The $ \mcM $-orthogonal decomposition \eqref{decomp_M} for $ \mbC^N $ becomes
\begin{equation}\label{decomp_M_C0}
	\begin{split}
		\mbC^{N} = \mbC_1 \oplus_{\mcM}\mbC_2.
	\end{split}
\end{equation}
The vector $ \mcM^{-1} \mcF $ can be divided into two $ \mcM $- orthogonal parts:
\begin{equation}\label{F_decomp}
	\begin{split}
		\mcM^{-1} \mcF  \triangleq  f^{(1)} + f^{(2)}.
	\end{split}
\end{equation}
Here $ f^{(1)} \in \mbC_1 $ and $ f^{(2)}\in\mbC_2 $.
Consequently, 
the term $ \mcF $ can be divided into two $ \mcM^{-1} $- orthogonal parts:
\begin{equation}\label{F_decomp_C0}
	\begin{split}
		\mcF = \mcM f^{(1)} + \mcM f^{(2)}, 	
	\end{split}
\end{equation}
where $ \mcM f^{(1)} \in \mcM\mbC_1 $ and $ \mcM f^{(2)}\in \mcM\mbC_2 $.

By the constraint condition $ \mcB^T u = 0 $ in the system \eqref{mixed_matrix_g0},
we know that $ u \in \Ker \mcB^T $.
As $ \dim \mbC_0 = 0 $, 
by Theorem \ref{kerBandkerBB}, we have
\begin{equation}\nonumber
	\begin{split}
		\Ker \mcB^T = \mbC_1.
	\end{split}
\end{equation}
Then the solution $ u\in \mbC^N $ in the system \eqref{mixed_matrix_g0} 
satisfies $ u\in \mbC_1 $ and can be written as
\begin{equation}\label{u_decomp}
	\begin{split}
		u \triangleq u^{(1)},
	\end{split}
\end{equation} 
where $ u^{(1)} \in \mbC_1 $.
Putting the decompositions \eqref{F_decomp} and \eqref{u_decomp} 
into the first equation of \eqref{mixed_matrix_g0},
we have
\begin{equation}\label{u_system}
	\begin{split}
		(\mcA + c\mcM)u + \mcB p &= \mcF, \\
		( \mcA + c\mcM) u^{(1)} +  \mcB p &=  \mcM f^{(1)} + \mcM f^{(2)}, \\
		\underbrace{ (\mcA + c\mcM) u^{(1)} - \mcM f^{(1)} }_{\mcM\mbC_1}  
		&=\underbrace{ \mcM f^{(2)} - \mcB p }_{\mcM\mbC_2}.
	\end{split}
\end{equation}
By Theorem \ref{invariant} 
and $ \mcB p \in \IM \mcB = \mcM \mbC_2 $ in Theorem \ref{ImB},
we know that 
the two parts in the equation above belong to 
two $ \mcM^{-1} $-orthogonal subspaces, respectively.
Then, to make this equation hold,
both parts should be zero.
By Theorem \ref{unique_solution},
there is a unique solution $ u^{(1)} \in \mbC_1 $ satisfying
\begin{equation}\label{u_1_component}
	\begin{split}
		(\mcA + c\mcM) u^{(1)} = \mcM f^{(1)}.
	\end{split}
\end{equation}
Then by decomposition \eqref{u_decomp},
the solution of $ u\in \mbC^N $ in the system \eqref{mixed_matrix_g0} 
is unique in the case $ \dim \mbC_0 = 0 $.
By Theorem \ref{ImB},
there exists a $ p\in \mbC^M $ such that
\begin{equation}\label{p_Mf2}
	\begin{split}
		\mcB p = \mcM f^{(2)}.
	\end{split}
\end{equation}
If the kernel $ \Ker \mcB $ in Theorem \ref{CM_decomp} is trivial
or in other words, the columns of $ \mcB $ are full-rank,
$ p $ in \eqref{mixed_matrix_g0} is unique.
Otherwise, $ p $ is not unique.

We consider the equation
\begin{equation}\label{tilde_u}
	\begin{split}
		(\mcA + \mcB\mcU\mcB^T ) \tilde u =  \mcF.
	\end{split}
\end{equation}
As the subspace $ \mbC_0 $ vanishes,
this equation is well-posed 
and has a unique solution.
We divide $ \tilde u $ into two $ \mcM $-orthogonal parts
\begin{equation}\label{tilde_u_decomp}
	\begin{split}
		\tilde u \triangleq  \tilde u^{(1)} + \tilde u^{(2)},
	\end{split}
\end{equation} 
where $ \tilde u^{(1)} \in \mbC_1 $ and $ \tilde u^{(2)}\in\mbC_2 $.
Putting the decompositions \eqref{F_decomp} and \eqref{tilde_u_decomp}  
into the equation \eqref{tilde_u},
we have
\begin{equation}\nonumber 
	\begin{split}
		(\mcA + \mcB\mcU\mcB^T ) \tilde u &= \mcF, \\
		(\mcA + \mcB\mcU\mcB^T ) \left( \tilde u^{(1)} + \tilde u^{(2)} \right)  
		&= \mcM f^{(1)} + \mcM f^{(2)}, \\
		\underbrace{\mcA  \tilde u^{(1)} - \mcM f^{(1)} }_{\mcM\mbC_1} 
		&= \underbrace{ \mcB\mcU\mcB^T \tilde u^{(2)}  - \mcM f^{(2)} }_{\mcM\mbC_2}.
	\end{split}
\end{equation}  
By Theorem \ref{invariant},
we know that 
the two parts in the equation above belong to 
two $ \mcM^{-1} $-orthogonal subspaces, respectively.
To make this equation hold,
both parts should be zero.
By Theorem \ref{unique_solution},
$ \tilde u^{(1)} \in \mbC_1 $ and $ \tilde u^{(2)}\in\mbC_2 $
are the unique solutions of 
$ \mcA  \tilde u^{(1)} = \mcM f^{(1)} $  
and  $ \mcB\mcU\mcB^T \tilde u^{(2)} = \mcM f^{(2)} $, respectively.    
Then if we have the solution $ \tilde u $ of the equation \eqref{tilde_u},
the components in the decomposition
\eqref{F_decomp_C0} for $ \mcF $
can be obtained explicitly:
\begin{equation}\label{F_tilde_u}
	\begin{split}
		\mcM f^{(1)} & \equiv \mcA \tilde u^{(1)} 
		= \mcA \left( \tilde u^{(1)} + \tilde u^{(2)} \right) 
		\equiv  \mcA \tilde u, \\
		\mcM f^{(2)} & \equiv \mcB\mcU\mcB^T \tilde u^{(2)} 
    	= \mcB\mcU\mcB^T \left( \tilde u^{(1)} + \tilde u^{(2)} \right) 
		\equiv \mcB\mcU\mcB^T \tilde u.
	\end{split}
\end{equation}    

\begin{remark}
	If we have the solution $ \tilde u $ of the equation \eqref{tilde_u},
	let $ p = \mcU\mcB^T \tilde u $.
	By the explicit decomposition \eqref{F_tilde_u},
	this $ p $ satisfies $ \mcB p = \mcM f^{(2)} $ in \eqref{p_Mf2}.
	Then we obtain  a solution of $ p $ for the system \eqref{mixed_matrix_g0}.
\end{remark}

Next, we consider the equation
\begin{equation}\label{bar_u}
	\begin{split}
		(\mcA + \mcB\mcU\mcB^T + c\mcM) \bar u = \mcF - \mcB\mcU\mcB^T \tilde u.
	\end{split}
\end{equation}
We divide $ \bar u $ into two $ \mcM $-orthogonal parts
\begin{equation}\label{bar_u_decomp}
	\begin{split}
		\bar u \triangleq  \bar u^{(1)} + \bar u^{(2)},
	\end{split}
\end{equation} 
where $ \bar u^{(1)} \in \mbC_1 $ and $ \bar u^{(2)}\in\mbC_2 $.
Putting the decompositions \eqref{F_decomp} and \eqref{bar_u_decomp} 
into the equation \eqref{bar_u},
using the explicit decomposition \eqref{F_tilde_u} for $ \mcF $,
we have
\begin{equation}\label{bar_u_equ_decomp}
	\begin{split}
		(\mcA + \mcB\mcU\mcB^T + c\mcM) \bar u &= \mcF - \mcB\mcU\mcB^T \tilde u ,\\
		(\mcA + \mcB\mcU\mcB^T + c\mcM) \left( \bar u^{(1)} + \bar u^{(2)}\right)  
		&= \mcM f^{(1)}, \\
		\underbrace{ (\mcA + c\mcM) \bar u^{(1)} - \mcM f^{(1)} }_{\mcM\mbC_1}   
		&= \underbrace{ - (\mcB\mcU\mcB^T + c\mcM) \bar u^{(2)}}_{\mcM\mbC_2}.
	\end{split}
\end{equation}
By Theorem \ref{unique_solution},
the $ \bar u^{(1)} \in \mbC_1 $  is the unique solution of
\begin{equation}\label{bar_u_1_component}
	\begin{split}
		(\mcA + c\mcM) \bar u^{(1)} = \mcM f^{(1)}
	\end{split}
\end{equation}
and the $ \bar u^{(2)} = 0 \in \mbC_2 $  is the unique solution of
$ (\mcB\mcU\mcB^T + c\mcM) \bar u^{(2)} = 0 $.
Then combining with the decomposition \eqref{bar_u_decomp},
we obtain the unique solution $ \bar u $ of the equation \eqref{bar_u}.

If we compare the solution $ \bar u $  of \eqref{bar_u_equ_decomp}
and the solution $ u $ of \eqref{u_system},
the only component $ u^{(1)} $ and $ \bar u^{(1)} $ contained in them
satisfy the same equation \eqref{u_1_component} or \eqref{bar_u_1_component}.
Then we obtain that
\begin{equation}\nonumber
	\begin{split}
		\bar u = u.
	\end{split}
\end{equation}
Consequently,
the solution of $ u\in \mbC^N $ in the system \eqref{mixed_matrix_g0}
can be obtained through 
solving the two equations \eqref{tilde_u} and \eqref{bar_u}.
We summarize the two equations as 
an equivalent problem for
the solution $ u\in\mbC^N $ in the system \eqref{mixed_matrix_g0}:
\begin{equation}\label{equiv_H0_all_g0}
	\boxed{
		\begin{split}
			&\text{If  $ \dim \mbC_0 = 0 $,
				solve $ \tilde u \in\mbC^N $ such that }\\
			&\qqq (\mcA + \mcB\mcU\mcB^T) \tilde u  = \mcF ,\\
			&\text{then solve $ u \in \mbC^N $ such that}\\
			&\qq (\mcA + \mcB\mcU\mcB^T + c\mcM) u 
			= \mcF - \mcB\mcU\mcB^T \tilde u.\\
		\end{split}
	}
\end{equation}

For the case $ \dim \mbC_0 = 0 $ and $ c > 0 $, 
we consider the equation
\begin{equation}\label{hat_u}
	\begin{split}
		(\mcA + c\mcM) \hat u = \mcF - \mcB\mcU\mcB^T \tilde u.
	\end{split}
\end{equation}
We divide $ \hat u $ into two $ \mcM $-orthogonal parts
\begin{equation}\label{hat_u_decomp}
	\begin{split}
		\hat u \triangleq  \hat u^{(1)} + \hat u^{(2)},
	\end{split}
\end{equation} 
where $ \hat u^{(1)} \in \mbC_1 $ and $ \hat u^{(2)}\in\mbC_2 $.
Putting the decompositions \eqref{F_decomp} and \eqref{hat_u_decomp} 
into the equation \eqref{hat_u},
we have
\begin{equation}\nonumber 
	\begin{split}
		(\mcA  + c\mcM) \hat u &= \mcF - \mcB\mcU\mcB^T \tilde u,\\
		(\mcA   + c\mcM) \left( \hat u^{(1)} + \hat u^{(2)}\right)  
		&= \mcM f^{(1)}, \\
		\underbrace{ (\mcA + c\mcM) \hat u^{(1)} - \mcM f^{(1)} }_{\mcM\mbC_1}   
		&= \underbrace{ -  c\mcM  \hat u^{(2)}}_{\mcM\mbC_2}.
	\end{split}
\end{equation}
By Theorem \ref{unique_solution},
the $ \hat u^{(1)}\in \mbC_1 $  is the unique solution of
$ (\mcA + c\mcM) \hat u^{(1)} = \mcM f^{(1)} $.
As $ \mcM $ is full-rank.
the $ \hat u^{(2)} =0 \in \mbC_2 $ is the unique solution of
$ c\mcM  \hat u^{(2)} =0 $.
If we compare the component in the solution $ \hat u $ of \eqref{hat_u_equ_decom} 
and the solution $ u $ of \eqref{u_system},
we can find that
\begin{equation}\label{hat_u_equ_decom}
	\begin{split}
		\hat u = u.
	\end{split}
\end{equation}
Then we have another equivalent problem for 
the solution $ u\in\mbC^N $ in the system \eqref{mixed_matrix_g0}:
\begin{equation}\label{equiv_H0_c1_g0}
	\boxed{
		\begin{split}
			&\text{If  $ \dim \mbC_0 = 0 $ and $ c>0 $,}\\
			&\text{	solve $ \tilde u \in\mbC^N $ such that }\\
			&\qqq (\mcA + \mcB\mcU\mcB^T) \tilde u  = \mcF ,\\
			&\text{ then solve $ u \in \mbC^N $ such that}\\
			&\qq (\mcA + c\mcM) u 
			= \mcF - \mcB\mcU\mcB^T \tilde u.\\
		\end{split}
	}
\end{equation}

\begin{remark}
	If we use direct method,
	the equation \eqref{hat_u} is a little easier to solve
	than the equation \eqref{bar_u},
	since the equation \eqref{bar_u} has more nonzero entries 
	because of the term $ \mcB \mcU \mcB^T $.
	When using iterative methods,
	the equation \eqref{bar_u} may be better than the equation \eqref{hat_u}, 
	especially for large-scalar problems,
	because the spectral distribution of the equation \eqref{bar_u} is Laplace-like,
	which we have discussed in our paper \cite{Aux_iter}.
\end{remark}

When $ c = 0 $ in the case $ \dim \mbC_0 = 0 $,
we can construct another equivalent problem for the system \eqref{mixed_matrix_g0}.
By Theorem \ref{unique_solution},
let $ u^{(1)} $ be the unique solution in the subspace $ \mbC_1 $
of the equation $ \mcA  u^{(1)} = \mcM f^{(1)} $
and $ u^{(2)}  $ be the unique solution in the subspace $ \mbC_2 $
of the equation $ \mcB\mcU\mcB^T  u^{(2)} = \mcM f^{(2)} $.
The $ u = u^{(1)} $ is the unique solution in the system 
\eqref{mixed_matrix_g0} in the case $ \dim\mbC_0 = 0 $ and $ c=0 $.
Then for a positive number $ \alpha_1 >0 $,
we can verify that
\begin{equation}\label{u_1_H0}
	\begin{split}
		u_1 \triangleq  u^{(1)} + \frac{1}{\alpha_1} u^{(2)}\\
	\end{split}
\end{equation}
is the unique solution of the equation
\begin{equation}\label{u_1_equ}
	\begin{split}
		(\mcA + \alpha_1 \mcB\mcU\mcB^T ) u_1 &=  \mcF, \\
		(\mcA + \alpha_1 \mcB\mcU\mcB^T ) (u^{(1)} + \frac{1}{\alpha_1} u^{(2)}) 
		&=  \mcM f^{(1)} + \mcM f^{(2)},\\
		\underbrace{\mcA  u^{(1)} - \mcM f^{(1)} }_{\mcM\mbC_1} 
		&= \underbrace{\mcM f^{(2)} - \mcB\mcU\mcB^T  u^{(2)} }_{\mcM\mbC_2}.
	\end{split}
\end{equation}
Similarly, for another positive number $ \alpha_2 \not= \alpha_1 $,
we know that
\begin{equation}\label{u_2_H0}
	\begin{split}
		u_2 \triangleq  u^{(1)} + \frac{1}{\alpha_2} u^{(2)}
	\end{split}
\end{equation} 
is the solution of the equation
\begin{equation}\label{u_2_equ}
	\begin{split}
		(\mcA + \alpha_2 \mcB\mcU\mcB^T ) u_2 = \mcF.
	\end{split}
\end{equation}
By combining $ u_1 $ in \eqref{u_1_H0} and $ u_2 $ in \eqref{u_2_H0}, 
we can obtain the solution $ u\in \mbC^N $ in the system \eqref{mixed_matrix_g0} by
\begin{equation}\nonumber 
	\begin{split}
		u \equiv u^{(1)} =\frac{\alpha_1 u_1 - \alpha_2 u_2}{\alpha_1 - \alpha_2}.
	\end{split}
\end{equation}
Then in the case $ \dim \mbC_0 = 0 $ and $ c = 0 $,
the solution $ u\in \mbC^N $ in the system \eqref{mixed_matrix_g0}
can be obtained through the two equations \eqref{u_1_equ} and \eqref{u_2_equ}:
\begin{equation}\label{equiv_H0_c0_g0}
	\boxed{
		\begin{split}
			&\text{If $ \dim \mbC_0 = 0 $ and $ c = 0 $,} \\
			& \text{ take two nonzero number 
				$ \alpha_1 \not = \alpha_2 $ }\\
			& \text{ and solve $ u_1, u_2 \in\mbC^N $ such that}\\
			&\qqq \left( \mcA  + \alpha_1 \mcB^T \mcU \mcB \right) u_1 = \mcF \qq \text{and}\\
			&\qqq \left( \mcA  + \alpha_2 \mcB^T \mcU \mcB \right) u_2 = \mcF.\\
			& \text{ Then the solution is}\\
			&\qqq \qqq u = \frac{\alpha_1 u_1 - \alpha_2 u_2 }{\alpha_1 - \alpha_2}.
		\end{split}
	}
\end{equation}

\begin{remark}	
	The equivalent problems \eqref{equiv_H0_all_g0} and \eqref{equiv_H0_c0_g0}
	are mathematically equivalent in the case $ \dim \mbC_0 = 0 $ and $ c = 0 $.
	The difference is that 
	the last equation in \eqref{equiv_H0_all_g0}
	depends on the first solution,
	while the two equations in \eqref{equiv_H0_c0_g0} are independent.
	When using iterative methods,
	two equations in \eqref{equiv_H0_c0_g0}
	can be solved simultaneously.
\end{remark}

\subsection{The case $ \dim \mbC_0 \not = 0 $ and $ c>0 $}

In this case, the subspace $ \mbC_0 $ is not trivial.
According to the decomposition \eqref{decomp_M},
the vector $ \mcM^{-1}\mcF $ can be divided into
three $\mcM $-orthogonal parts
\begin{equation}\nonumber
	\begin{split}
		\mcM^{-1} \mcF \triangleq f^{(0)}  + f^{(1)} + f^{(2)},
	\end{split}
\end{equation} 
where $ f^{(0)} \in \mbC_0 $, $ f^{(1)}\in \mbC_1 $ and $ f^{(2)}\in \mbC_2 $.
Then there is a $\mcM^{-1} $-orthogonal decomposition for $ \mcF $
\begin{equation}\label{F_decomp_c0}
	\begin{split}
		\mcF = \mcM f^{(0)}  + \mcM f^{(1)} + \mcM f^{(2)},
	\end{split}
\end{equation} 
where $ \mcM f^{(0)}\in \mcM\mbC_0 $, $ \mcM f^{(1)} \in \mcM\mbC_1 $ 
and $ \mcM f^{(2)}\in \mcM \mbC_2 $.

From the constraint condition $  \mcB^T u = 0 $ in the system \eqref{mixed_matrix_g0}
and by Theorem \ref{kerBandkerBB},
we know that
\begin{equation}\label{u_decomp_c0}
	\begin{split}
		u \in \Ker \mcB^T = \mbC_0 \oplus_\mcM \mbC_1.
	\end{split}
\end{equation} 
Then the solution of $ u \in \mbC^N $ in the system \eqref{mixed_matrix_g0}
can be divided into 
two $\mcM $-orthogonal parts
\begin{equation}\label{u_decomp_H1}
	\begin{split}
		u \triangleq u^{(0)}  + u^{(1)},
	\end{split}
\end{equation} 
where $ u^{(0)} \in \mbC_0 $ and $ u^{(1)}\in \mbC_1 $.
Putting the decompositions \eqref{F_decomp_c0} and \eqref{u_decomp_H1} into 
the first equation of the system \eqref{mixed_matrix_g0},
we have
\begin{equation}\label{u_p_decomp}
	\begin{split}
		(\mcA + c\mcM)u + \mcB p &= \mcF, \\
		(\mcA + c\mcM)(u^{(0)}  + u^{(1)}) + \mcB p &= \mcM{f^{(0)}  + \mcM f^{(1)} + \mcM f^{(2)}}, \\
		\underbrace{c \mcM u^{(0)} - \mcM f^{(0)}}_{\mcM\mbC_0} 
		+ \underbrace{(\mcA + c\mcM) u^{(1)} - \mcM f^{(1)}}_{\mcM\mbC_1} 
		&= \underbrace{\mcM f^{(2)} - \mcB p}_ {\mcM\mbC_2}.
	\end{split}
\end{equation}
By Theorem \ref{invariant} 
and $ \mcB p \in \IM \mcB = \mcM \mbC_2 $ in Theorem \ref{ImB},
we know that 
the three parts in the equation above belong to 
three $ \mcM^{-1} $-orthogonal subspaces, respectively.
To make this equation hold,
all the three parts should be zero.
By Theorem \ref{unique_solution},
there is a unique solution $ u^{(1)} \in \mbC_1 $ satisfying
\begin{equation}\nonumber 
	\begin{split}
		(\mcA + c\mcM) u^{(1)} = \mcM f^{(1)}.
	\end{split}
\end{equation}
As $ \mcM $ is full-rank,
\begin{equation}\nonumber 
	\begin{split}
		u^{(2)} = \frac{1}{c}f^{(2)}
	\end{split}
\end{equation}
is the unique solution of the equation
\begin{equation}\nonumber 
	\begin{split}
		c \mcM u^{(0)} - \mcM f^{(0)} = 0.
	\end{split}
\end{equation}
Then by the decomposition \eqref{u_decomp_H1},
the solution of $ u\in \mbC^N $ in the system \eqref{mixed_matrix_g0} 
is unique
in the case $ \dim \mbC_0 \not= 0 $ and $ c>0 $.
By Theorem \ref{ImB},
there exists a $ p\in \mbC^M $ such that
\begin{equation}\label{p_Mf3}
	\begin{split}
		\mcB p = \mcM f^{(2)}.
	\end{split}
\end{equation}

We compute the eigenvectors of the zero eigenvalue of the 
generalized eigenvalue problem
\begin{equation}\label{AB_eig_c0}
	\begin{split}
		(\mcA + \mcB\mcU\mcB^T) u = \lambda \mcM u.
	\end{split}
\end{equation}	
The eigenvectors of the zero eigenvalue can form a base 
$ \left\lbrace \mfu^{(0)}_1 \right\rbrace _{i=1}^{\dim \mbC_0} $ 
of the subspace $ \mbC_0 $
that satisfies Definition \ref{H_def}.
Let $ \mcH \in \mbC^{ N \times \dim \mbC_0} $:
\begin{equation}\nonumber 
	\begin{split}
		\mcH = 
		\left[ \mfu^{(0)}_1,\mfu^{(0)}_2,\cdots, \mfu^{(0)}_{\dim \mbC_0} \right].
	\end{split}
\end{equation}
We consider the following equation
\begin{equation}\label{tilde_u_c0}
	\begin{split}
		\left( \mcA + \mcB\mcU\mcB^T + \mcM \mcH \mcH^T \mcM\right)
		\tilde u = \mcF.
	\end{split}
\end{equation}		
We divide $ \tilde u $ into three $ \mcM $-orthogonal parts
\begin{equation}\label{tilde_u_decomp_c0} 
	\begin{split}
		\tilde u \triangleq  \tilde u^{(0)}  + \tilde u^{(1)} + \tilde u^{(2)},
	\end{split}
\end{equation}
where $ \tilde u^{(0)} \in \mbC_0 $, $ \tilde u^{(1)}\in \mbC_1 $ 
and $ \tilde u^{(2)}\in \mbC_2 $.
Putting the decompositions \eqref{F_decomp_c0} and \eqref{tilde_u_decomp_c0} 
into the equation \eqref{tilde_u_c0},
we have 
\begin{equation}\nonumber 
	\begin{split}
		\left( \mcA + \mcB\mcU\mcB^T + \mcM \mcH \mcH^T \mcM\right)\tilde u 
		&= \mcF\\
		\left( \mcA + \mcB\mcU\mcB^T + \mcM \mcH \mcH^T \mcM\right)
		\left( \tilde u^{(0)}  + \tilde u^{(1)} + \tilde u^{(2)}\right)  
		&=  \mcM f^{(0)} + \mcM f^{(1)} + \mcM f^{(2)}  \\
		\mcA \tilde u^{(1)} + \mcB\mcU\mcB^T \tilde u^{(2)} + \mcM \mcH \mcH^T \mcM \tilde u^{(0)}
		&=   \mcM f^{(0)} + \mcM f^{(1)} + \mcM f^{(2)} \\
		\underbrace{\mcM \mcH \mcH^T \mcM \tilde u^{(0)} - \mcM f^{(0)} }_{\mcM\mbC_0}
		+ \underbrace{\mcA \tilde u^{(1)} - \mcM f^{(1)} }_{\mcM\mbC_1} 
		&= \underbrace{ \mcB\mcU\mcB^T \tilde u^{(2)}  - \mcM f^{(2)} }_{\mcM\mbC_2}.
	\end{split}
\end{equation}	
By Theorem \ref{invariant},
we know that 
the three parts in the equation above belong to 
three $ \mcM^{-1} $-orthogonal subspaces, respectively.
To make this equation hold,
all the three parts should be zero.
By Theorem \ref{unique_solution},
$ \tilde u^{(0)} \in \mbC_0 $ is the unique solution of 
$ \mcM \mcH \mcH^T \mcM \tilde u^{(0)} = \mcM f^{(0)} $,
$ \tilde u^{(1)} \in \mbC_1 $ is the unique solution of 
$ \mcA \tilde u^{(1)} = \mcM f^{(1)} $  
and $ u^{(2)}  \in \mbC_2 $ is the unique solution of 
$ \mcB\mcU\mcB^T \tilde u^{(2)}  = \mcM f^{(2)} $.
Similar to \eqref{F_tilde_u}, 
if we have the solution $ \tilde u $ of the equation \eqref{tilde_u_c0},
we can obtain the explicit decomposition for 
the right hand side $ \mcF $ in the case $ \dim \mbC_0 \not = 0 $:
\begin{equation}\label{F_tilde_u_H1}
	\begin{split}
		\mcM f^{(0)} &= \mcM \mcH \mcH^T \mcM \tilde u,\\
		\mcM f^{(1)} &= \mcA \tilde u, \\
		\mcM f^{(2)} &= \mcB\mcU\mcB^T \tilde u.
	\end{split}
\end{equation} 

We consider the equation
\begin{equation}\label{bar_u_c0}
	\begin{split}
		\left( \mcA + \mcB\mcU\mcB^T + c\mcM \right)  \bar u = \mcF - \mcB\mcU\mcB^T \tilde u.
	\end{split}
\end{equation} 
In this equation, 
by the decomposition \eqref{F_tilde_u_H1},
the component in $ \mcM \mbC_2 $ of $ \mcF $  is eliminated.
To study its solution, 
we also divide $ \bar u $ in three $ \mcM $-orthogonal parts
\begin{equation}\nonumber
	\begin{split}
		\bar u \triangleq  \bar u^{(0)}  + \bar u^{(1)} + \bar u^{(2)},
	\end{split}
\end{equation} 
where $ \bar u^{(0)} \in \mbC_0 $, $ \bar u^{(1)}\in \mbC_1 $ 
and $ \bar u^{(2)}\in \mbC_2 $.
Then we have
\begin{equation}\nonumber 
	\begin{split}
		(\mcA + \mcB\mcU\mcB^T + c\mcM) \hat u &=  \mcF - \mcB\mcU\mcB^T \tilde u \\
		(\mcA + \mcB\mcU\mcB^T + c\mcM) \left( \bar u^{(0)}  + \bar u^{(1)} + \bar u^{(2)}\right)  
		&=  \mcM f^{(0)} + \mcM f^{(1)} \\
		\underbrace{ c\mcM \bar u^{(0)} - \mcM f^{(0)} }_{\mcM \mbC_0}
		+ \underbrace{ (\mcA + c\mcM) \bar u^{(1)} - \mcM f^{(1)} }_{\mcM \mbC_1}
		&= - \underbrace{(\mcB\mcU\mcB^T + c\mcM) \bar u^{(2)} }_{\mcM \mbC_2}.
	\end{split}
\end{equation}
To make this equation hold,
all the three parts should be zero as before.
By Theorem \ref{unique_solution},
$ \bar u^{(1)} \in \mbC_1 $ is the unique solution of the equation
\begin{equation}\nonumber 
	\begin{split}
		(\mcA + c\mcM) \bar u^{(1)} = \mcM f^{(1)},
	\end{split}
\end{equation}
and  $ \bar u^{(1)} = 0 \in \mbC_2 $ is the unique solution of the equation
\begin{equation}\nonumber 
	\begin{split}
		(\mcB\mcU\mcB^T + c\mcM) \bar u^{(2)}= 0.
	\end{split}
\end{equation}
As $ \mcM $ is full-rank,
\begin{equation}\nonumber 
	\begin{split}
		\bar u^{(0)} = \frac{1}{c}f^{(0)}
	\end{split}
\end{equation}
is the unique solution of the equation
\begin{equation}\nonumber 
	\begin{split}
		c \mcM \hat u^{(0)} - \mcM f^{(0)} = 0.
	\end{split}
\end{equation}

If we compare each component in the solution $ \bar u $ of the equation \eqref{bar_u_c0} 
and the solution $ u $ of the equation \eqref{u_p_decomp},
we obtain that
\begin{equation}\nonumber
	\begin{split}
		\bar u = u.
	\end{split}
\end{equation}
Then we have an equivalent problem for 
the solution $ u\in \mbC^N $ in the system \eqref{mixed_matrix_g0}
in the case $ \dim \mbC_0 \not= 0 $ and $ c>0 $:	
\begin{equation}\label{equiv_H1_all_g0}
	\boxed{
		\begin{split}
			& \text{If $ \dim \mbC_0 \not = 0 $ and $ c > 0 $,} \\ 
			& \text{ find the eigenvectors $ \mcH $ with zero eigenvalue}\\
			& \qqq \left( \mcA + \mcB\mcU\mcB^T\right)  u = \lambda \mcM u,\\
			&\text{ then	solve $ \tilde u \in\mbC^N $ such that }\\
			&\qqq \left( \mcA + \mcB\mcU\mcB^T + \mcM \mcH \mcH^T \mcM\right)  \tilde u = \mcF,\\
			&\text{ then solve $ u \in \mbC^N $ such that}\\
			&\qq \left( \mcA  + \mcB\mcU\mcB^T + c\mcM\right)  u = \mcF - \mcB\mcU\mcB^T \tilde u.\\
		\end{split}
	}
\end{equation}		

\begin{remark}
	
	The subspace $ \mbC_0 $ is an inherent characteristic of a system.
	For a system, the eigenvalue problems \eqref{AB_eig_c0} 
	needs to be computed only once.
	The matrix $ \mcH $ can be fixed when the right hand side varies.
	
	As the eigenvectors with zero eigenvalue of 
	the eigenvalue problem
	are usually full vectors,
	the term $ \mcM \mcH \mcH^T \mcM $ is probably a full matrix.
	This results that if the dimension of the matrix is a litter large,
	direct methods become inefficient and even impossible
	when computing the equation \eqref{tilde_u_c0}.
	An alternative choice is to use iterative methods,
	where the matrix-vector multiplication $ \mcM \mcH \mcH^T \mcM u $
	can be computed one by one.
	Thus we can avoid dealing with the term $ \mcM \mcH \mcH^T \mcM $ in an explicit way.
\end{remark}

Similar to \eqref{hat_u}, as $ c>0 $,
we consider the equation
\begin{equation}\label{hat_u_c0}
	\begin{split}
		(\mcA + c\mcM) \hat u = & \mcF - \mcB\mcU\mcB^T \tilde u.
	\end{split}
\end{equation}
We also divide $ \hat u \in \mbC^N $ into three $ \mcM $-orthogonal parts
\begin{equation}\label{hat_u_decomp_c0}
	\begin{split}
		\hat u \triangleq  \hat u^{(0)}  + \hat u^{(1)} + \hat u^{(2)},
	\end{split}
\end{equation} 
where $ \hat u^{(0)} \in \mbC_0 $, $ \hat u^{(1)}\in \mbC_1 $ 
and $ \hat u^{(2)}\in \mbC_2 $.
Putting the decompositions \eqref{F_decomp_c0} and \eqref{hat_u_decomp_c0} 
into the equation \eqref{hat_u_c0},
we have 
\begin{equation}\nonumber 
	\begin{split}
		(\mcA + c\mcM) \hat u &=  \mcF - \mcB\mcU\mcB^T \tilde u, \\
		(\mcA + c\mcM) \left( \hat u^{(0)}  + \hat u^{(1)} + \hat u^{(2)}\right)  
		&= \mcM f^{(0)} + \mcM f^{(1)}, \\
		\underbrace{ c\mcM \hat u^{(0)} - \mcM f^{(0)} }_{\mcM \mbC_0}
		+ \underbrace{ (\mcA + c\mcM) \hat u^{(1)} - \mcM f^{(1)} }_{\mcM \mbC_1}
		&= - \underbrace{c\mcM\hat u^{(2)} }_{\mcM \mbC_2}.
	\end{split}
\end{equation}
To make this equation hold,
all the three parts should be zero.
By Theorem \ref{unique_solution},
$ \hat u^{(1)} \in \mbC_1 $ is the unique solution of the equation
$ \left( \mcA + c\mcM\right)  \hat u^{(1)} = \mcM f^{(1)} $.
As $ \mcM $ is full-rank,
$ \hat u^{(0)} = \frac{1}{c}f^{(0)} $
is the unique solution of the equation
$ c \mcM \hat u^{(0)} - \mcM f^{(0)} = 0 $
and $ \hat u^{(1)} = 0 \in \mbC_2 $ is the unique solution of the equation
$ c\mcM\hat u^{(2)} = 0 $.
If we compare each component in the solution $ \hat u $ of the equation \eqref{hat_u_c0} 
and the solution $ u $ of the equation \eqref{u_p_decomp},
we obtain that
\begin{equation}\nonumber
	\begin{split}
		\hat u = u.
	\end{split}
\end{equation}
Then we have another equivalent problem for 
the solution $ u\in \mbC^N $ in the system \eqref{mixed_matrix_g0}
in the case $ \dim \mbC_0 \not= 0 $ and $ c>0 $:	
\begin{equation}\label{equiv_H1_2_g0}
	\boxed{
		\begin{split}
			& \text{If $ \dim \mbC_0 \not = 0 $ and $ c > 0 $,} \\ 
			& \text{ find the eigenvectors $ \mcH $  zero eigenvalue}\\
			& \qqq \left( \mcA + \mcB\mcU\mcB^T\right)  u = \lambda \mcM u,\\
			&\text{ then	solve $ \tilde u \in\mbC^N $ such that }\\
			&\qqq \left( \mcA + \mcB\mcU\mcB^T + \mcM \mcH \mcH^T \mcM\right)  \tilde u = \mcF,\\
			&\text{ then solve $ u \in \mbC^N $ such that}\\
			&\qq \left( \mcA   + c\mcM\right)  u = \mcF - \mcB\mcU\mcB^T \tilde u.\\
		\end{split}
	}
\end{equation}

\subsection{The case $ \dim \mbC_0 \not= 0 $ and $ c = 0 $} 
In the case $ c = 0 $,
the problem \eqref{mixed_matrix_g0}  becomes
\begin{equation}\label{mixed_matrix_g0_c0}
	\begin{split}
		\mcA u + \mcB p &= \mcF, \\
		\mcB^T u &= 0.
	\end{split}
\end{equation}
As $ \dim \mbC_0 \not= 0 $,
the right-hand side $ \mcF \in \mbC^N $ 
and the solution $ u\in \mbC^N $ 
in this system
can be still decomposed as 
\eqref{F_decomp_c0} and \eqref{u_decomp_H1}, respectively.
Putting the two decompositions
into the first equation of the system \eqref{mixed_matrix_g0_c0},
we have
\begin{equation}\nonumber 
	\begin{split}
		\mcA u + \mcB p &= \mcF, \\
		\mcA (u^{(0)}  + u^{(1)}) + \mcB p 
		&= \mcM f^{(0)}  + \mcM f^{(1)} + \mcM f^{(2)}, \\
		\underbrace{ \left( \mcA u^{(1)} - \mcM f^{(1)} \right) }_{\mcM\mbC_1} 
		+ \underbrace{ \left( \mcB p - \mcM f^{(2)} \right) }_{\mcM\mbC_2} 
		&= \underbrace{ \mcM f^{(0)} }_{\mcM\mbC_0}.
	\end{split}
\end{equation}
As the three parts in this equation are $ \mcM^{-1} $-orthogonal,
if the term $ \mcM f^{(0)} \not = 0 $,
the equation above never hold.
If $ \mcM f^{(0)} = 0 $,
there exist $ u^{(1)} $ and $ p $ that make this equation hold.
Here,
$ u^{(1)} $ is the unique solution of 
$ \mcA u^{(1)} = \mcM f^{(1)} $ in $ \mcM \mbC_1 $ 
by Theorem \ref{unique_solution}
and $ p $ satisfies $ \mcB p = \mcM f^{(2)} $ by Theorem \ref{ImB}.
However, there is no restriction on 
the component $ u^{(0)} $.
For any $ u^{(0)} \in \mbC_0 $,
$ u = u^{(0)}  + u^{(1)} $ is a solution for $ u\in \mbC^N $ 
in the system \eqref{mixed_matrix_g0_c0}
when $ \mcM f^{(0)} = 0 $.

In this paper, 
we shall not consider this case.

\subsection{Summary}
We summarize the equivalent problems in Table \ref{table_g0}
for the solution $ u \in \mbC^N $
in the system \eqref{mixed_matrix_g0} 
in the case $ \mcG= 0 $.
\begin{table}[ht]  
	\centering  
	\begin{tabular}{| c| c | c |}
		\hline
		& $ c>0 $ & $ c=0 $ \\
		\hline
		$ \dim \mbC_0  = 0 $ 
		&  \eqref{equiv_H0_all_g0} \eqref{equiv_H0_c1_g0}   
		& \eqref{equiv_H0_all_g0} \eqref{equiv_H0_c0_g0} \\
		\hline
		$ \dim \mbC_0 \not = 0 $ 
		& \eqref{equiv_H1_all_g0} \eqref{equiv_H1_2_g0} 
		& \tabincell{c}{no solution or \\ no unique solution}\\	
		\hline	
	\end{tabular}
	\caption{The equivalent problems for the solution $ u \in \mbC^N $ 
		in the system \eqref{mixed_matrix_g0} in the case $ \mcG= 0 $.}
	\label{table_g0}
\end{table}


\section{ The case $ \mcG \not = 0 $}\label{sec_4}

In this section,
we consider the general case $ \mcG \not = 0 $ for the problem
\begin{equation}\label{mixed_matrix_g1}
	\begin{split}
		(\mcA + c\mcM)u + \mcB p &= \mcF, \\
		\mcB^T u &= \mcG.
	\end{split}
\end{equation} 
In this case, 
$ \mcG $ is required to be contained in $ \IM \mcB^T $,
i.e. $ \mcG \in \IM\mcB^T $.
Otherwise, there is no solution satisfying 
the constraint condition $ \mcB^T u = \mcG $.

\begin{remark}
	There is a special case 
	that $ \Ker \mcB $ vanishes in Theorem \ref{CM_decomp} 
	and $ \IM\mcB^T = \mbC^M $.
	Then there always exists a $ u \in \mbC^N $ such that $ \mcB^T u = \mcG $
	for any $ \mcG \in \mbC^M $.
\end{remark}

In the case $ 0 \not= \mcG \in \IM \mcB^T $,
if we have a $ u_g \in \mbC^N $ satisfying 
\begin{equation}\label{BTug}
	\begin{split}
		\mcB^T u_g = \mcG,
	\end{split}
\end{equation}
then the solution $ u\in\mbC^N $ in the system \eqref{mixed_matrix_g1}
can be the superposition of the two components
\begin{equation}\nonumber 
	\begin{split}
		u = u_0 + u_g.
	\end{split}
\end{equation}
Here $ u_0 $ is the solution of the system
\begin{equation}\label{mixed_matrix_u0}
	\begin{split}
		(\mcA + c\mcM)u_0 + \mcB p &= \mcF - (\mcA + c\mcM)u_g,\\
		\mcB^T u_0 &= 0.
	\end{split}
\end{equation}
The solution of the system \eqref{mixed_matrix_u0} 
has been considered in the previous section.
The remained thing is 
how to find a $ u_g \in \mbC^N $ satisfying 
the constraint condition $ \mcB^T u_g = \mcG $.

By Theorem \ref{ImB}, 
we have
\begin{equation}\nonumber 
	\begin{split}
		\mcB \mcU \mcG \in \mcM \mbC_2.
	\end{split}
\end{equation}
We consider the equation
\begin{equation}\label{u_g}
	\begin{split}
		(\mcA + \mcB\mcU\mcB^T ) u_g = \mcB \mcU \mcG.
	\end{split}
\end{equation} 
We divide $ u_g $ into three $ \mcM $-orthogonal parts:
\begin{equation}\label{u_g_decomp}
	\begin{split}
		u_g = u_g^{(0)} + u_g^{(1)} + u_g^{(2)},
	\end{split}
\end{equation}
where $ u_g^{(0)} \in \mbC_0 $, 
$ u_g^{(1)}\in \mbC_1 $ and $ u_g^{(2)}\in \mbC_2 $.
Putting the decomposition  \eqref{u_g_decomp} into the equation \eqref{u_g},
we have
\begin{equation}\nonumber 
	\begin{split}
		(\mcA + \mcB\mcU\mcB^T ) u_g 
		&= \mcB \mcU \mcG,\\
		(\mcA + \mcB\mcU\mcB^T ) (u_g^{(0)} + u_g^{(1)} + u_g^{(2)}) 
		&= \mcB \mcU \mcG,\\
		\underbrace{ \mcA u_g^{(1)} }_{\mcM\mbC_1} 
		&= \underbrace{\mcB \mcU \mcG -\mcB\mcU\mcB^T u_g^{(2)} }_{\mcM\mbC_2}.
	\end{split}
\end{equation}
The two parts in the equation above are $ \mcM^{-1} $-orthogonal.
To make this equation hold,
both parts should be zero.
By Theorem \ref{unique_solution},
$ u_g^{(1)} = 0 \in \mbC_1 $ is the unique solution
of the equation $ \mcA u_g^{(1)} = 0 $
and $ u_g^{(2)} \in \mbC_1 $
is the unique solution of 
the equation
\begin{equation}\label{u_g_2}
	\begin{split}
		\mcB \mcU \mcG -\mcB\mcU\mcB^T u_g^{(2)} = 0.
	\end{split}
\end{equation}
If $ \dim \mbC_0 = 0 $, 
the component $ u_g^{(0)} $ vanishes.
The equation \eqref{u_g} is well-posed and
has a unique solution 
\begin{equation}\nonumber 
	\begin{split}
		u_g = u_g^{(2)}.
	\end{split}
\end{equation}
If $ \dim \mbC_0 \not = 0 $,
the solution of the equation \eqref{u_g} is not unique.
For any $ u_g^{(0)} \in \mbC_0 $,
\begin{equation}\nonumber 
	\begin{split}
		u_g = u_g^{(0)} + u_g^{(2)}
	\end{split}
\end{equation}
is a solution of the equation \eqref{u_g},
where $ u_g^{(2)} $ satisfies the equation \eqref{u_g_2}.
As $ u_g^{(0)} \in \Ker \mcB\mcU\mcB^T $ by Theorem \ref{invariant},
in the both cases, we have 
\begin{equation}\nonumber 
	\begin{split}
		\mcB\mcU\mcB^T u_g = \mcB\mcU\mcB^T u_g^{(2)} = \mcB \mcU \mcG.
	\end{split}
\end{equation}
In both cases,
we know that this $ u_g $ satisfies 
the constraint condition  $ \mcB^T u_g = \mcG \in \IM \mcB^T$
by Theorem \ref{BTGU}.

In the case $ \dim \mbC_0 \not = 0 $, 
we have an alternative way to obtain a $ u_g $.
We insert the term $ \mcM\mcH\mcH^T\mcM $ into the equation \eqref{u_g} 
as that in \eqref{tilde_u_c0}
and get the equation
\begin{equation}\label{u_g_H1}
	\begin{split}
		\left( \mcA + \mcB\mcU\mcB^T + \mcM \mcH \mcH^T \mcM\right) u_g
		= \mcB \mcU \mcG.
	\end{split}
\end{equation}	
Putting the decompositions \eqref{u_g_decomp} into this equation, 
we have
\begin{equation}\nonumber 
	\begin{split}
		\left( \mcA + \mcB\mcU\mcB^T + \mcM \mcH \mcH^T \mcM\right) u_g
		&= \mcB \mcU \mcG,\\
		\left( \mcA + \mcB\mcU\mcB^T + \mcM \mcH \mcH^T \mcM\right)
		\left(   u_g^{(0)}  +   u_g^{(1)} +   u_g^{(2)}\right)  
		&=  \mcB \mcU \mcG,  \\
		\mcA u_g^{(1)} + \mcB\mcU\mcB^T u_g^{(2)} + \mcM \mcH \mcH^T \mcM  u_g^{(0)}
		&=   \mcB \mcU \mcG, \\
		\underbrace{\mcM \mcH \mcH^T \mcM u_g^{(0)}  }_{\mcM\mbC_0}
		+ \underbrace{\mcA  u_g^{(1)}   }_{\mcM\mbC_1} 
		&= \underbrace{\mcB \mcU \mcG - \mcB\mcU\mcB^T u_g^{(2)}}_{\mcM\mbC_2}.
	\end{split}
\end{equation}
To make this equation hold,
$ u_g^{(0)} = 0 \in \mbC_0 $, $ u_g^{(1)} = 0 \in \mbC_1 $
and $ u_g^{(2)} \in \mbC_2 $
are the unique solutions
that make the three $ \mcM^{-1} $-orthogonal parts be zero 
by Theorem \ref{unique_solution}.
Then we obtain a unique solution of the equation \eqref{u_g_H1}
\begin{equation}\nonumber 
	\begin{split}
		u_g =  u_g^{(2)} \in \mbC_2.
	\end{split}
\end{equation}
By Theorem \ref{BTGU},
this $ u_g $ also satisfies the constraint condition
$ \mcB^T u_g = \mcG \in \IM \mcB^T $.

\begin{theorem}\label{u_g_u_0}
	If $ u_g \in \mbC_2 $ such that $ \mcB^T u_g = \mcG \in \IM \mcB^T $,
	and the solution $ u $ in the system \eqref{mixed_matrix_g0}
	and the solution $ u_0 $ in the system \eqref{mixed_matrix_u0} 
	are unique,
	then we have $ u=u_0 $.
\end{theorem}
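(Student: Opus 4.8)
The plan is to show directly that the $u$-component of the solution of \eqref{mixed_matrix_g0} also solves the perturbed system \eqref{mixed_matrix_u0}, and then to conclude by the assumed uniqueness. So let $(u,p)$ be a solution of \eqref{mixed_matrix_g0}, that is, $(\mcA+c\mcM)u+\mcB p=\mcF$ and $\mcB^T u=0$.

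The first step is to exploit the hypothesis $u_g\in\mbC_2$. Since $\mbC_2\subset\Ker\mcA$ (shown in Section \ref{sec_2}, before Theorem \ref{invariant}), we have $\mcA u_g=0$, hence $(\mcA+c\mcM)u_g=c\mcM u_g$. If $c=0$ this already makes the right-hand side of \eqref{mixed_matrix_u0} equal to $\mcF$, so \eqref{mixed_matrix_u0} coincides verbatim with \eqref{mixed_matrix_g0} and we are done. If $c>0$, then by Theorem \ref{ImB} we have $c\mcM u_g\in\mcM\mbC_2=\IM\mcB$, so there is a $p_g\in\mbC^M$ with $\mcB p_g=c\mcM u_g=(\mcA+c\mcM)u_g$.

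The second step is a substitution into \eqref{mixed_matrix_u0}. The pair $(u,\,p-p_g)$ satisfies
\begin{equation}\nonumber
	\begin{split}
		(\mcA+c\mcM)u+\mcB(p-p_g) &= \mcF-\mcB p_g = \mcF-(\mcA+c\mcM)u_g,\\
		\mcB^T u &= 0,
	\end{split}
\end{equation}
which is precisely the system \eqref{mixed_matrix_u0}. Hence the $u$-part of a solution of \eqref{mixed_matrix_u0} can be taken to be $u$. Since the $u_0$-component of the solution of \eqref{mixed_matrix_u0} is unique by hypothesis, we conclude $u_0=u$.

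I do not expect a genuine obstacle; once the ingredients are assembled the argument is two lines. The single point needing care — and the only place the hypothesis $u_g\in\mbC_2$ is really used — is the assertion $(\mcA+c\mcM)u_g\in\IM\mcB$, which rests on $\mbC_2\subset\Ker\mcA$ together with $\mcM\mbC_2=\IM\mcB$ from Theorem \ref{ImB}; if $u_g$ carried a nonzero $\mbC_0$- or $\mbC_1$-component this would fail, and the change of right-hand side could not be absorbed into the multiplier $p$. A longer, more computational alternative would be to expand $\mcM^{-1}\mcF$ and $\mcM^{-1}(\mcF-(\mcA+c\mcM)u_g)$ along the decomposition \eqref{decomp_M}, observe that they share the same $\mbC_0$- and $\mbC_1$-components because $cu_g\in\mbC_2$, and then quote the explicit description of the solution from \eqref{u_system} and \eqref{u_p_decomp}; I would prefer the short multiplier argument.
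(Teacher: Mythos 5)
Your proof is correct. The key fact is the same as in the paper's proof: since $u_g\in\mbC_2\subset\Ker\mcA$, the perturbation of the right-hand side is $(\mcA+c\mcM)u_g=c\mcM u_g\in\mcM\mbC_2=\IM\mcB$ (Theorem \ref{ImB}). Where you differ is in how this fact is used. The paper appeals to its earlier componentwise analyses \eqref{u_system} and \eqref{u_p_decomp} to argue that the $u$-component of the solution is determined solely by the $\mcM\mbC_0$ and $\mcM\mbC_1$ parts of the data, so a perturbation lying in $\mcM\mbC_2$ cannot affect it --- exactly the ``longer, computational alternative'' you sketch at the end. You instead absorb the perturbation into the Lagrange multiplier: writing $c\mcM u_g=\mcB p_g$, checking that $(u,\,p-p_g)$ solves \eqref{mixed_matrix_u0} verbatim, and invoking the assumed uniqueness of $u_0$. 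Your route is more self-contained, needing only Theorem \ref{ImB} and the inclusion $\mbC_2\subset\Ker\mcA$ rather than the full decomposition arguments, and it dispatches the case $c=0$ trivially; the paper's route makes explicit the structural point that the $u$-component is insensitive to the $\mcM\mbC_2$ part of the right-hand side, which is the viewpoint it reuses throughout Sections \ref{sec_3} and \ref{sec_4}. Both are valid, and your identification of $u_g\in\mbC_2$ as the hypothesis that cannot be weakened is exactly right.
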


\begin{proof}
	As $ u_g \in \mbC_2 $,
	we have
	\begin{equation}\label{Mu_g}
		\begin{split}
			(\mcA + c\mcM)u_g = c \mcM u_g \in \mcM \mbC_2. 
		\end{split}
	\end{equation} 
	By the analysis in \eqref{u_system} and \eqref{u_p_decomp},
	the solutions $ u $ in \eqref{mixed_matrix_g0} and $ u_0 $ in \eqref{mixed_matrix_u0}
	are decided by the components in the subspaces $ \mcM\mbC_0 $ and $ \mcM \mbC_1 $
	of their right-hand sides, respectively.
	Compared with the system \eqref{mixed_matrix_g0}, 
	the additional term in the right-hand side of the system \eqref{mixed_matrix_u0} 
	is  $  c \mcM u_g \in \mcM \mbC_2 $ by \eqref{Mu_g}.
	This term has no influence on the solution $ u_0 \in \mbC_0 \oplus_\mcM \mbC_1 $.
	The components in $ \mbC_0 \oplus_\mcM \mbC_1 $ in the right-hand sides 
	of the two system are the same.
	Consequently, their solutions $ u $ and $ u_0 $ are equal.
\end{proof}

In the case $ \dim \mbC_0 = 0 $,
we use the equation \eqref{u_g} to solve $ u_g $,
while in the case $ \dim \mbC_0 \not = 0 $,
we use the equation \eqref{u_g_H1} to solve $ u_g $.
The $ u_g $ is unique in both equations 
and $ u_g \in \mbC_2 $ satisfying
$ \mcB^T u_g = \mcG \in \IM \mcB^T $.
Then, by Theorem \ref{u_g_u_0}, 
we can solve $ u_0 $ in the system \eqref{mixed_matrix_u0} 
by the system \eqref{mixed_matrix_g0} 
which is the case $ \mcG=0 $.
After obtaining its solution, 
we add the solution $ u_g $ of the equation \eqref{u_g} or \eqref{u_g_H1} to it.
That is the solution of the general case $ 0 \not = \mcG \in \IM \mcB^T $.

To add the term $ u_g \in \mbC_2 $,
the last equations in the equivalent problems 
\eqref{equiv_H0_all_g0}, \eqref{equiv_H0_c1_g0}, \eqref{equiv_H1_all_g0} 
and \eqref{equiv_H1_2_g0}:
\begin{equation}\nonumber
	\begin{split}
		\left( \mcA   + c\mcM\right)  u  &= \mcF - \mcB\mcU\mcB^T \tilde u\\
		\text{and} \qq
		\left( \mcA + \mcB\mcU\mcB^T + c\mcM\right)  u &= \mcF - \mcB\mcU\mcB^T \tilde u
	\end{split}
\end{equation}
are modified as
\begin{equation}\nonumber
	\begin{split}
		\left( \mcA   + c\mcM\right)  u  
		&= \mcF - \mcB\mcU\mcB^T \tilde u + \left( \mcA  + c\mcM\right) u_g  
		=  \mcF - \mcB\mcU\mcB^T \tilde u + c\mcM u_g  \\
		\text{and} \qq
		\left( \mcA + \mcB\mcU\mcB^T + c\mcM\right)  u 
		&= \mcF - \mcB\mcU\mcB^T \tilde u
		+ \left( \mcA + \mcB\mcU\mcB^T + c\mcM\right) u_g
		= \mcF - \mcB\mcU\mcB^T \tilde u + \mcB\mcU\mcG + c\mcM u_g.
	\end{split}
\end{equation}
Here we use $ \mcB\mcU\mcB^T u_g = \mcB\mcU\mcG $ and $ \mcA u_g = 0 $ as $ u_g \in \mbC_2 $.
Then,
based on the equivalent problems
\eqref{equiv_H0_all_g0}, \eqref{equiv_H0_c1_g0}, \eqref{equiv_H1_all_g0} 
and \eqref{equiv_H1_2_g0} for the case $ \mcG = 0 $,
the equivalent problems 
for the solution $ u\in \mbC^N $ in the system \eqref{mixed_matrix_g1}
are the follows, respectively:
\begin{equation}\label{equiv_H0_all_g1}
	\boxed{
		\begin{split}
			&\text{If  $ \dim \mbC_0 = 0 $ and $ c\geq 0 $,}\\
			&\text{	solve $ u_g \in\mbC^N $ such that }\\
			&\qqq \left( \mcA  + \mcB\mcU\mcB^T \right) u_g = \mcB \mcU \mcG,\\
			&\text{ then solve $ \tilde u \in \mbC^N $ such that}\\ 			
			&\qqq \left( \mcA  + \mcB\mcU\mcB^T \right) \tilde u  = \mcF,\\
			&\text{ then solve $ u \in \mbC^N $ such that}\\
			& \left(\mcA + \mcB\mcU\mcB^T + c\mcM \right) u 
			= \mcF - \mcB\mcU\mcB^T \tilde u + \mcB\mcU\mcG + c\mcM u_g .\\
		\end{split}
	}
\end{equation}

\begin{equation}\label{equiv_H0_c1_g1}
	\boxed{
		\begin{split}
			&\text{If  $ \dim \mbC_0 = 0 $ and $ c>0 $,}\\
			&\text{	solve $ u_g \in\mbC^N $ such that }\\
			&\qqq \left( \mcA  + \mcB\mcU\mcB^T \right) u_g = \mcB \mcU \mcG,\\
			&\text{ then solve $ \tilde u \in \mbC^N $ such that}\\ 			
			&\qqq \left( \mcA  + \mcB\mcU\mcB^T \right) \tilde u  = \mcF,\\
			&\text{ then solve $ u \in \mbC^N $ such that}\\
			& \left( \mcA   + c\mcM\right)  u 
			= \mcF - \mcB\mcU\mcB^T \tilde u + c\mcM u_g.
		\end{split}
	}
\end{equation}

\begin{equation}\label{equiv_H1_all_g1}
	\boxed{
		\begin{split}
			& \text{If $ \dim \mbC_0 \not = 0 $ and $ c > 0 $,} \\ 
			& \text{ find the eigenvectors $ \mcH $ with zero eigenvalue}\\
			& \qqq \left( \mcA + \mcB\mcU\mcB^T\right)  u = \lambda \mcM u,\\
			& \text{ find $ u_g \in\mbC^N $ such that }\\
			&\qqq \left( \mcA  + \mcB \mcU \mcB^T + \mcM\mcH\mcH^T\mcM \right) u_g = \mcB \mcU \mcG,\\
			&\text{ then	solve $ \tilde u \in\mbC^N $ such that }\\
			&\qqq \left( \mcA + \mcB\mcU\mcB^T + \mcM\mcH\mcH^T\mcM \right)  \tilde u = \mcF,\\
			&\text{ then solve $ u \in \mbC^N $ such that}\\
			& \left( \mcA + \mcB\mcU\mcB^T + c\mcM\right)  u 
			= \mcF - \mcB\mcU\mcB^T \tilde u + \mcB\mcU\mcG + c\mcM u_g.\\
		\end{split}
	}
\end{equation}

\begin{equation}\label{equiv_H1_2_g1}
	\boxed{
		\begin{split}
			& \text{If $ \dim \mbC_0 \not = 0 $ and $ c > 0 $,} \\ 
			& \text{ find the eigenvectors $ \mcH $ with zero eigenvalue}\\
			& \qqq \left( \mcA + \mcB\mcU\mcB^T\right)  u = \lambda \mcM u,\\
			& \text{ find $ u_g \in\mbC^N $ such that }\\
			&\qqq \left( \mcA  + \mcB \mcU \mcB^T + \mcM\mcH\mcH^T\mcM \right) u_g = \mcB \mcU \mcG,\\
			&\text{ then solve $ \tilde u \in\mbC^N $ such that }\\
			&\qqq \left( \mcA + \mcB\mcU\mcB^T + \mcM\mcH\mcH^T\mcM \right)  \tilde u = \mcF,\\
			&\text{ then solve $ u \in \mbC^N $ such that}\\
			& \left( \mcA   + c\mcM\right)  u 
			= \mcF - \mcB\mcU\mcB^T \tilde u + c\mcM u_g.
		\end{split}
	}
\end{equation}

When $ c=0 $ in this case $ \dim \mbC_0 = 0 $,
we find that $ u_g $ is not necessary in the last equation of
the equivalent problem \eqref{equiv_H0_all_g1}.
Then the last two equations are enough in the case
$ \dim \mbC_0 = 0 $ and $ c=0 $:
\begin{equation}\label{equiv_H0_c0_1_g1}
	\boxed{
		\begin{split}
			&\text{If  $ \dim \mbC_0 = 0 $ and $ c=0 $,} \\
			&\text{	solve $ \tilde u \in\mbC^N $ such that }\\
			&\qqq \left( \mcA  + \mcB\mcU\mcB^T \right) \tilde u  = \mcF,\\
			&\text{ then solve $ u \in \mbC^N $ such that}\\
			& \left(\mcA + \mcB\mcU\mcB^T \right) u 
			= \mcF  - \mcB\mcU\mcB^T \tilde u
			+ \mcB \mcU \mcG.\\
		\end{split}
	}
\end{equation}

For the case $ \dim \mbC_0 = 0 $ and $ c = 0 $,
the equivalent problem \eqref{equiv_H0_c0_g0} for $ \mcG = 0 $
can be also modified for  the general case $ 0 \not = \mcG \in \IM \mcB^T $.
The solution $ u\in \mbC^N $ in the system \eqref{mixed_matrix_g1} is
\begin{equation}\label{u_H0_c0}
	\begin{split}
		u =  u^{(1)} + u_g.
	\end{split}
\end{equation}
Here $ u^{(1)} \in \mbC_1 $ is unique the solution of 
$ \mcA u^{(1)} = \mcM f^{(1)}  $
and $ u_g \in \mbC_2 $ is unique the solution of 
$ \mcB\mcU\mcB^T u_g = \mcB \mcU \mcG  $.
Let $ u^{(2)} \in \mbC_2 $ denote the unique solution of 
$ \mcB\mcU\mcB^T u^{(2)} = \mcM f^{(2)}  $.
Then for a positive number $ \alpha_1 >0 $,
we can verify that
\begin{equation}\label{u_1_s}
	\begin{split}
		u_1 \triangleq  u^{(1)} + u_g +\frac{1}{\alpha_1} u^{(2)}\\
	\end{split}
\end{equation}
is the solution of the problem
\begin{equation}\label{u_1}
	\begin{split}
		(\mcA + \alpha_1 \mcB\mcU\mcB^T ) u_1 &=  \mcF + \alpha_1 \mcB\mcU\mcG, \\
		(\mcA + \alpha_1 \mcB\mcU\mcB^T ) (u^{(1)} + u_g + \frac{1}{\alpha_1} u^{(2)}) 
		&=  \mcM f^{(1)} + \mcM f^{(2)} + \alpha_1 \mcB\mcU\mcG,\\
		\underbrace{\mcA  u^{(1)} - \mcM f^{(1)} }_{\mcM\mbC_1} 
		&= \underbrace{
			\left( \mcM f^{(2)} - \mcB\mcU\mcB^T  u^{(2)} \right)
			+ \alpha_1\left(  \mcB\mcU\mcG - \mcB\mcU\mcB^T  u_g \right)  
		}_{\mcM\mbC_2}.
	\end{split}
\end{equation}
Similarly, we can also verify that
\begin{equation}\label{u_2}
	\begin{split}
		u_2 \triangleq  u^{(1)} + u_g +\frac{1}{\alpha_2} u^{(2)}\\
	\end{split}
\end{equation}
is the solution of the equation
\begin{equation}\nonumber 
	\begin{split}
		(\mcA + \alpha_2 \mcB\mcU\mcB^T ) u_1 &=  \mcF + \alpha_2 \mcB\mcU\mcG.
	\end{split}
\end{equation}
Finally, we can obtain the solution \eqref{u_H0_c0} 
by the linear combination of 
the solutions of two equations \eqref{u_1_s} and \eqref{u_2}:
\begin{equation}\nonumber
	\begin{split}
		u \equiv u^{(1)} + u_g = 
		\frac{\alpha_1 u_1 - \alpha_2 u_2 }{\alpha_1 - \alpha_2}.
	\end{split}
\end{equation}
Then we can obtain the following equivalent problem 
for the solution $ u \in \mbC^N $ in the system \eqref{mixed_matrix_g1}
in the case
$ \dim \mbC_0 = 0 $ and $ c = 0 $:
\begin{equation}\label{equiv_H0_c0_2_g1}
	\boxed{
		\begin{split}
			&\text{If $ \dim \mbC_0 = 0 $ and $ c = 0 $,
				for $ \alpha_1 \not = \alpha_2 >0 $,}\\
			&\text { solve $ u_1, u_2 \in\mbC^N $ such that}\\
			&\;\; \left( \mcA  + \alpha_1 \mcB\mcU\mcB^T \right) u_1 
			= \mcF + \alpha_1 \mcB\mcU\mcG \;\; \text{  and  }\\
			&\;\; \left( \mcA  + \alpha_2 \mcB \mcU \mcB^T \right) u_2 
			= \mcF + \alpha_2 \mcB\mcU\mcG,\\
			& \text{ then the solution is}\\
			&\qqq \qqq u = \frac{\alpha_1 u_1 - \alpha_2 u_2 }{\alpha_1 - \alpha_2}.
		\end{split}
	}
\end{equation}
In the equivalent problem \eqref{equiv_H0_c0_1_g1} and \eqref{equiv_H0_c0_2_g1},
there is no need to compute an explicit $ u_g $.

In the end of this section,
we summarize the equivalent problems in Table \ref{table_g1} for 
the solution $ u \in \mbC^N $ 
in the system \eqref{mixed_matrix_g1}
for the case $ \mcG \not = 0  $.
\begin{table}[ht]  
	\centering  
	\begin{tabular}{| c| c | c |}
		\hline
		& $ c>0 $ & $ c=0 $ \\
		\hline
		$ \dim \mbC_0  = 0 $ 
		& \eqref{equiv_H0_all_g1} \eqref{equiv_H0_c1_g1}
		& \eqref{equiv_H0_all_g1}  
		\eqref{equiv_H0_c0_1_g1} \eqref{equiv_H0_c0_2_g1}\\
		\hline
		$ \dim \mbC_0 \not = 0 $ 
		& \eqref{equiv_H1_all_g1} \eqref{equiv_H1_2_g1} 
		& \tabincell{c}{no solution or \\ no unique solution}   \\	
		\hline	
	\end{tabular}
	\caption{The equivalent problems 
		for the system \eqref{mixed_matrix} in the case $ \mcG \not = 0  $.}
	\label{table_g1}
\end{table}

\section{The discretization using finite element complexes}\label{sec_5}

In this section, 
we use the finite element spaces on discrete de Rham complex
to discretize the constrained problem
\begin{equation}\label{mixed_dd}
	\begin{split}
		(d^k)^* d^k \vu + c \vu + d^{k-1} \vp &= \vf, \\
		(d^{k-1})^* \vu &= \vg.
	\end{split}
\end{equation}
We will show the relation between its discrete weak formulation 
and the matrix system \eqref{mixed_matrix}.
We use the framework 
of finite element exterior calculus (FEEC) in 
\cite{ arnold2018finite, arnold2006finite,arnold2010finite}
to study the discrete problem of \eqref{mixed_dd}.
We refer the readers to these references 
for more details.

\subsection{The weak formulation}
We study the weak formulation of \eqref{mixed_dd} 
in the framework of the Hilbert complex.
The de Rham complex is a typical example of the Hilbert complex
when the operators are differential operators
and the spaces are the corresponding function spaces.
Let us consider a Hilbert complex $ (W,d) $.
The $ d^k $ is a closed densely defined operator
from $ W^k $ to $ W^{k+1} $ 
and its domain is denoted by $ V^k $.
Then the corresponding  domain complex is
\begin{equation}\label{Hcomplex}
	\begin{split}
		\xymatrix{
			0 \ar[r] & V^0 \ar[r]^{d^0} 
			& V^1 \ar[r]^{d^1} 
			& \cdots \ar[r]^{d^{n-1}} 
			& V^n \ar[r]& 0.
		}
	\end{split}
\end{equation}
The adjoint operator of $ d^k $ is denoted by
$ (d^k)^*: W^{k+1} \to W^{k} $
and is defined as
\begin{equation}\nonumber 
	\begin{split}
		\left\langle  d^k\vu \, ,\, \vv \right\rangle 
		=\left\langle \vu \, ,\,  (d^k)^* \vv \right\rangle,
	\end{split}
\end{equation}
if $ \vu \in V^{k} $ or $ \vv \in W^{k+1} $
vanishes near the boundary.
Its domain is a dense subset of $ W^{k+1} $
and denoted by $ V^*_{k+1} $.
Then we have the dual complex
\begin{equation}\nonumber 
	\begin{split}
		\xymatrix{
			0 \ar[r] & V^*_n \ar[r]^{(d^{n-1})^*} 
			& V^*_{n-1} \ar[r]^{(d^{n-2})^*} 
			& \cdots \ar[r]^{(d^{0})^*} 
			& V^*_0 \ar[r]& 0.
		}
	\end{split}
\end{equation}
The range and the null spaces of the differential operators
are denotes by
\begin{equation}\nonumber 
	\begin{split}
		\mfB^k = d^{k-1}V^{k-1},
		\qq \mfZ^k = \mcN(d^k),
		\qq \mfB^*_k = (d^{k})^* V^*_{k+1},
		\qq \mfZ^*_k = \mcN((d^{k-1})^*).
	\end{split}
\end{equation}
The cohomology space is denoted by $ \mcH^k = \mfZ^k/\mfB^k $
and the space of harmonic $ k $-forms is denoted by
$ \mfH^k = \mfZ^k \cap \mfZ^*_k $.

In this paper,
we focus on the Hilbert complex with \emph{compactness property},
i.e. the inclusion $ V^k\cap V^*_k \subset W^k $ is compact for each $ k $.
In this case, 
the Hilbert complex is closed and Fredholm \cite[Theorem 4.4]{arnold2018finite}.
Then we have 
$$ \mcH^k \cong \mfH^k, $$
and their dimensions are finite.
There is the following Hodge decomposition \cite[Theorem 4.5]{arnold2018finite}
\begin{equation}\nonumber 
	\begin{split}
		V^k = \mfB^k \oplus \mfH^k \oplus \mfZ^{k\bot_V}.
	\end{split}
\end{equation}
Here $ \mfZ^{k\bot_V} = \mfB^*_k \cap V^k $.

The problem \eqref{mixed_dd} involves a segment on the complex \eqref{Hcomplex}:
\begin{equation}\label{Hcomplex_segment}
	\begin{split}
		\xymatrix{
			V^{k-1} \ar[r]^{d^{k-1}} 
			& V^k \ar[r]^{d^k} 
			& V^{k+1}.
		}
	\end{split}
\end{equation}
The weak formulation of \eqref{mixed_dd} is:
find $ \vu \in  V^k $ such that
\begin{equation}\label{mixed_d_weak}
	\begin{split}
		\left\langle d^k \vu \, ,\,  d^k \vv \right\rangle
		+ c \left\langle  \vu \, ,\,  \vv \right\rangle
		+ \left\langle d^{k-1}\vp \, ,\,  \vv \right\rangle 
		&= \left\langle \vf \, ,\,  \vv \right\rangle
		\qqq \vv \in V^k,\\
		\left\langle \vu \, ,\,  d^{k-1}\vq \right\rangle 
		&= \left\langle \vg \, ,\,  \vq \right\rangle
		\qqq  \vq \in V^{k-1}.
	\end{split}
\end{equation}

\subsection{The approximation for the mixed formulation} \label{approx_Hodge}

Let $ V^{k-1}_h$ and $ V^{k}_h $ denote the 
finite element spaces of $ V^{k-1}$ and $ V^{k} $ 
on the complex segment \eqref{Hcomplex_segment}, respectively.
The corresponding discrete weak formulation of \eqref{mixed_d_weak} is:
find $ \vu_h \in  V^k_h $ such that
\begin{equation}\label{mixed_d_dis}
	\begin{split}
		\left\langle d^k \vu_h \, ,\,  d^k \vv_h \right\rangle
		+ c \left\langle  \vu_h \, ,\,  \vv_h \right\rangle
		+ \left\langle d^{k-1}\vp_h \, ,\,  \vv_h \right\rangle 
		&= \left\langle \vf \, ,\,  \vv_h \right\rangle
		\qqq \; \vv_h \in V^k_h,\\
		\left\langle \vu_h \, ,\,  d^{k-1}\vq_h \right\rangle 
		&= \left\langle \vg \, ,\, \vq_h \right\rangle
		\qqq\; \vq_h \in V^{k-1}_h.\\
	\end{split}
\end{equation}
The finite element spaces are required to have the following properties:
\begin{itemize}
	\item $ 1^\circ $ Approximation property:
	\begin{equation}\nonumber 
		\begin{split}
			\textfor \;\vu \in V^j, \qq
			\lim_{h\to 0}\inf_{\vv_h \in V^j_h}\nm{\vu-\vv_h} = 0,
			\qq  j = k-1 \;\textand\; k.
		\end{split}
	\end{equation}
	\item $ 2^\circ $ Subcomplex property:
	$ d^{k-1} V^{k-1}_h \subset V^k_h $ and $ d^k V^{k}_h \subset V^{k+1}_h $,
	i.e. the three spaces form a complex segment:
	\begin{equation}\label{complex_segment_discrete}
		\begin{split}
			\xymatrix{
				V^{k-1}_h \ar[r]^{d^{k-1}} 
				& V^{k}_h \ar[r]^{d^{k}} 
				& V^{k+1}_h.	
			}
		\end{split}
	\end{equation}
	\item $ 3^\circ $ Bounded cochain projections
	$ \pi^j:V^j \to V^j_h $, $ j = k-1,k,k+1 $:
	the following diagram commutes:
	\begin{equation}\nonumber 
		\begin{split}
			\xymatrix{
				V^{k-1} \ar[r]^{d^{k-1}} \ar[d]^{\pi_h^{k-1}}
				& V^{k} \ar[r]^{d^{k}} \ar[d]^{\pi_h^{k}}
				& V^{k+1} \ar[d]^{\pi_h^{k+1}}\\
				V^{k-1}_h \ar[r]^{d^{k-1}} 
				& V^{k}_h \ar[r]^{d^{k}} 
				& V^{k+1}_h.	
			}
		\end{split}
	\end{equation}
	And $ \pi^j_h $ is bounded, 
	i.e. there exists a constant $ c $ such that
	$ \nm{\pi^j_h \vv} \leq c\nm{\vv} $
	for all $ \vv \in V^j $.
	
\end{itemize}

The discrete differential operator $ d^k_h $
is defined as the restriction of $ d^k $ 
on the finite dimensional space $ V^j_h $:
\begin{equation}\nonumber 
	\begin{split}
		d^k_h = d^k|_{V^k_h}: V^k_h \to V^{k+1}_h.
	\end{split}
\end{equation} 
As the dimension of $ V^j_h $ is finite,
the discrete operator $ d^j_h $ is bounded.
Then its adjoint $ (d^j_h)^* $ is everywhere defined
and the spaces $ V^*_{jh} $ coincide with 
$ W^j_h = V^j_h $.
Then,
for $ \vu_h \in V^{k+1}_h $,
$ (d^k_h)^*\vu_h \in V^k_h $
can be presented as:
\begin{equation}\nonumber 
	\begin{split}
		\left\langle  (d^k_h)^*\vu_h \, ,\, \vv_h \right\rangle 
		=\left\langle \vu_h \, ,\,  d^k_h  \vv_h \right\rangle,
	\end{split}
\end{equation}
for all  $ \vv \in V^k_h $.
The range and the null space of the discrete differential operators
are denotes by
\begin{equation}\nonumber 
	\begin{split}
		\mfB^k_h = d^{k-1}_h V^{k-1}_h,
		\qq \mfZ^k_h = \mcN(d^k_h),
		\qq \mfB^{*}_{kh} = (d^{k}_h)^*V^{k+1}_h
		\qq \text{and} \qq \mfZ^{*}_{kh} = \mcN((d^{k-1}_h)^*).
	\end{split}
\end{equation}
The space of discrete harmonic $k$-forms is denoted by
$ \mfH^k_h = \mfZ^k_h \cap \mfZ^{*}_{kh} $.
Then there is the discrete Hodge decomposition \cite[(5.6)]{arnold2018finite}:
\begin{equation}\label{discrete_Hodge_decomposition}
	\begin{split}
		V^k_h =  \mfH^k_h \oplus \mfB^{*}_{kh} \oplus \mfB^k_h .
	\end{split}
\end{equation}
By  \cite[Theorem 5.1]{arnold2018finite},
if the finite element spaces $ V^{k-1}_h $ and $ V^{k}_h $ satisfy these properties,
there is an isomorphism between $ \mfH^k_h $ and $ \mfH^k $.
This means that 
if the dimension of $ \mfH^k $ is a limited number,
the dimension of $ \mfH^k_h $ keeps stable with the mesh varying.

\subsection{The matrix forms of the discrete problem }

In the subsection,
we construct the coefficient matrices involved in the discrete form \eqref{mixed_d_dis}.
Let $ M = \dim V^{k-1}_h $, $ N = \dim V^{k}_h $,
$ \left\lbrace \vq_{h,i}\right\rbrace_{i=1}^M  $
and
$ \left\lbrace \vv_{h,i}\right\rbrace_{i=1}^N  $
be the base of the space $ V^{k-1}_h $ and $ V^k_h $,
respectively.
We use light letters to denote
the coefficient vectors of the 
discrete functions  in $ V^{k-1} $ and $ V^{k} $.
According to \eqref{mixed_d_dis},
the coefficient matrices
$ \mcA \in \mbC^{N\times N} $ and $ \mcB \in \mbC^{N \times M} $,
the mass matrices
$ \mcM \in \mbC^{N\times N} $ 
and the right-hand sides $ \mcF \in \mbC^N $ and $ \mcG \in \mbC^M $
are constructed as follows:
\begin{equation}\label{def_matrix}
	\begin{split}
		\mcA_{ij} &= \left\langle d^k \vv_{h,i} \, ,\, d^k \vv_{h,j} \right\rangle,
		\qqq \qq
		\mcB_{ij} = \left\langle  d^{k-1}\vq_{h,j} \, ,\,  \vv_{h,i} \right\rangle,\\
		\mcM_{ij} &= \left\langle  \vv_{h,i} \, ,\,  \vv_{h,j} \right\rangle, 
		\qqq\;
		\mcF_i = \left\langle  \vf  \, ,\,  \vv_{h,i} \right\rangle
		\qq \textand \qq
		\mcG_i = \left\langle  \vg \, ,\,  \vq_{h,i} \right\rangle.
	\end{split}
\end{equation}
Then the matrix system  of 
the discrete weak formulation \eqref{mixed_d_dis} is
\begin{equation}\nonumber 
	\begin{split}
		\left( \mcA + c \mcM \right)  u + \mcB p &= \mcF,\\
		\mcB^T  u &= \mcG.
	\end{split}
\end{equation}
The following theorem shows that
the theoretical results in Section \ref{sec_2}
also match the system we define in this section.
Then the discrete problems \eqref{mixed_d_dis} 
can be solved using the equivalent problems 
that we constructed in Section \ref{sec_3} and \ref{sec_4}.
\begin{theorem}
	The matrices $ \mcA, \mcM \in \mbC^{N\times N} $ and $ \mcB \in \mbC ^{N \times M} $
	that we define in this section satisfy Assumption \ref{AMB0}:
	\begin{equation}\nonumber 
		\begin{split}
			\mcA \mcM^{-1}\mcB = 0.
		\end{split}
	\end{equation}
\end{theorem}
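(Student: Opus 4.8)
The plan is to translate each coefficient matrix into the operation it represents on finite element functions, and then to use the complex identity $d^k\circ d^{k-1}=0$. Fix a column index $j\in\{1,\dots,M\}$ and let $\mfw^{(j)}\in\mbC^N$ denote the $j$-th column of $\mcM^{-1}\mcB$. By \eqref{def_matrix}, the $j$-th column of $\mcB$ has entries $\left\langle d^{k-1}\vq_{h,j}\,,\,\vv_{h,i}\right\rangle$, $i=1,\dots,N$, i.e.\ it is the load vector of the function $d^{k-1}\vq_{h,j}\in W^k$ tested against the basis of $V^k_h$; since $\mcM$ is the Gram matrix of that basis, $\mfw^{(j)}$ is the coefficient vector of the $L^2$-projection of $d^{k-1}\vq_{h,j}$ onto $V^k_h$. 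Here the subcomplex property $d^{k-1}V^{k-1}_h\subset V^k_h$ enters: $d^{k-1}\vq_{h,j}$ already lies in $V^k_h$, so it coincides with its own projection, and therefore $\sum_{i=1}^N\mfw^{(j)}_i\,\vv_{h,i}=d^{k-1}\vq_{h,j}$.

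Next I would compute the action of $\mcA$. For any $w\in\mbC^N$ with associated finite element function $\vw_h=\sum_{l}w_l\,\vv_{h,l}\in V^k_h$, the $i$-th component of $\mcA w$ is $\sum_{l}\left\langle d^k\vv_{h,i}\,,\,d^k\vv_{h,l}\right\rangle w_l=\left\langle d^k\vv_{h,i}\,,\,d^k\vw_h\right\rangle$ by \eqref{def_matrix} and bilinearity. Applying this with $w=\mfw^{(j)}$ and $\vw_h=d^{k-1}\vq_{h,j}$ gives
\begin{equation}\nonumber
	\left(\mcA\mcM^{-1}\mcB\right)_{ij}
	=\left\langle d^k\vv_{h,i}\,,\,d^k d^{k-1}\vq_{h,j}\right\rangle
	\qquad\text{for all }i,j.
\end{equation}
Since $\vq_{h,j}\in V^{k-1}_h\subset V^{k-1}$, the term $d^k d^{k-1}\vq_{h,j}$ is well defined in $W^{k+1}$, and it vanishes because $d^k\circ d^{k-1}=0$ on the complex \eqref{Hcomplex}. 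Hence every entry of $\mcA\mcM^{-1}\mcB$ is zero, which is Assumption \ref{AMB0}.

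The only step that is not pure bookkeeping is the identification of the $j$-th column of $\mcM^{-1}\mcB$ with the coefficient vector of $d^{k-1}\vq_{h,j}$, and within it the single nontrivial input is that the $L^2$-projection of $d^{k-1}\vq_{h,j}$ onto $V^k_h$ equals the function itself -- precisely the content of the subcomplex property $2^\circ$. No other discretization hypothesis (the approximation property or the existence of bounded cochain projections) is needed, and the argument is independent of the value of $c\ge 0$ and of whether $\mfH^k$ is trivial.
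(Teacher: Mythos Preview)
Your proof is correct and follows essentially the same route as the paper's own argument: identify the columns of $\mcM^{-1}\mcB$ with the coefficient vectors of $d^{k-1}\vq_{h,j}\in V^k_h$ via the subcomplex property, then kill the result by $\mcA$ using $d^k\circ d^{k-1}=0$. The only cosmetic difference is that the paper applies the argument to $\mcA\mcM^{-1}\mcB\,p$ for an arbitrary $p\in\mbC^M$ rather than column-by-column, which amounts to the same computation.
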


\begin{proof}
	For a vector $ p \in \mbC^{M } $,
	let $ \vp_h \in V^{k-1}_h $ be the function
	that is represented by $ p $ and the base
	$ \left\lbrace \vq_{h,i}\right\rbrace_{i=1}^M $:
	\begin{equation}\nonumber 
		\begin{split}
			\vp_h = \sum_{i=1}^M p_i \vq_{h,i}.
		\end{split}
	\end{equation}
	As there is the property $ d^{k-1}V^{k-1}_h \subset V^k_h $ on the complex \eqref{complex_segment_discrete},
	we know that $ \vu_h \triangleq d^{k-1} \vp_h \in V^k_h $.
	Let $ u\in \mbC^N $ be the coefficient vector of $ \vu_h $ in the base 
	$ \left\lbrace \vv_{h,i}\right\rbrace_{i=1}^N $:
	\begin{equation}\nonumber 
		\begin{split}
			\vu_h = \sum_{i=1}^N u_i \vv_{h,i}.
		\end{split}
	\end{equation}
	Then we have
	\begin{equation}\label{B_present}
		\begin{split}
			\left\langle  d^{k-1}\vp_h \, ,\,  \vv_{h,i} \right\rangle
			= \left\langle  \vu_h \, ,\,  \vv_{h,i} \right\rangle
			\qqq \vv_{h,i} \in V^k_h.
		\end{split}
	\end{equation}
	Associated with the definition of the matrix $ \mcB $ in \eqref{def_matrix},
	the matrix representation of \eqref{B_present} is 
	\begin{equation}\label{B_present_2}
		\begin{split}
			\mcB p = \mcM u.
		\end{split}
	\end{equation}
	For this $ \vu_h $, we have
	\begin{equation}\label{A_present}
		\begin{split}
			\left\langle  d^{k}\vu_h \, ,\,  d^{k}\vv_{h,i} \right\rangle
			= \left\langle d^{k} d^{k-1} \vp_h \, ,\,  d^{k}\vv_{h,i} \right\rangle = 0
			\qqq \vv_{h,i} \in V^k_h,
		\end{split}
	\end{equation}
	where we use the property $ d^{k} d^{k-1} = 0 $ on complex.
	Associated with the definition of the matrix $ \mcA $ in \eqref{def_matrix},
	the matrix representation of \eqref{A_present} is 
	\begin{equation}\label{A_present_2}
		\begin{split}
			\mcA u = 0.
		\end{split}
	\end{equation}
	Combining \eqref{B_present_2} and \eqref{A_present_2}, 
	we have $ \mcA \mcM^{-1}\mcB p = 0 $ for any $ p \in \mbC^M $. 
	Then we obtain the conclusion.
\end{proof}

\subsection{The discrete Hodge Laplacian problem}

If we replace $ \vg $ by $ \vp $ in the constrained problem \eqref{mixed_dd},
we obtain the Hodge Laplacian problem:
\begin{equation}\nonumber 
	\begin{split}
		(d^k)^* d^k \vu + c \vu + d^{k-1} \vp &= \vf, \\
		\vp - (d^{k-1})^* \vu &= 0.
	\end{split}
\end{equation}
Its weak formulation is:
find $ \vu \in  V^k $ such that
\begin{equation}\nonumber 
	\begin{split}
		\left\langle d^k \vu \, ,\,  d^k \vv \right\rangle
		+ c \left\langle  \vu \, ,\,  \vv \right\rangle
		+ \left\langle d^{k-1}\vp \, ,\,  \vv \right\rangle 
		&= \left\langle \vf \, ,\,  \vv \right\rangle
		\qqq \vv \in V^k,\\
		\left\langle \vp \, ,\,  \vq \right\rangle 
		- \left\langle \vu \, ,\,  d^{k-1}\vq \right\rangle 
		&= 0
		\qqq \qqq \;\,  \vq \in V^{k-1},\\
	\end{split}
\end{equation}
The discrete weak formulation by the finite element spaces 
$ V^{k-1}_h $ and $ V^k_h $ is:
find $ \vu_h \in  V^k_h $ such that
\begin{equation}\label{Hodge_eig}
	\begin{split}
		\left\langle d^k \vu_h \, ,\,  d^k \vv_h \right\rangle
		+ c \left\langle  \vu_h \, ,\,  \vv_h \right\rangle
		+ \left\langle d^{k-1}\vp_h \, ,\,  \vv_h \right\rangle 
		&= \left\langle \vf \, ,\,  \vv_h \right\rangle
		\qq \;\, \vv_h \in V^k_h,\\
		\left\langle \vp_h \, ,\, \vq_h \right\rangle
		-\left\langle \vu_h \, ,\,  d^{k-1}\vq_h \right\rangle 
		&= 0
		\qqq \qqq \; \vq_h \in V^{k-1}_h.\\
	\end{split}
\end{equation}
In this discrete problem,
it involves a mass matrix $ \mcM_{k-1} \in \mbC^{M \times M} $ for the space $ V^{k-1}_h $:
\begin{equation}\nonumber 
	\begin{split}
		(\mcM_{V^{k-1}})_{ij} = \left\langle  \vq_{h,i} \, ,\,  \vq_{h,j} \right\rangle.
	\end{split}
\end{equation}
The matrix system of the discrete Hodge problem \eqref{Hodge_eig}  is 
\begin{equation}\nonumber 
	\begin{split}
		\left( \mcA + c \mcM \right)  u + \mcB p &= \mcF,\\
		\mcM_{V^{k-1}} p  - \mcB^T  u &= 0.
	\end{split}
\end{equation}
Substituting the second equation into the first one,
we have 
\begin{equation}\label{Hodge_matrix}
	\begin{split}
		\left( \mcA+ \mcB \mcM_{V^{k-1}}^{-1}\mcB^T + c \mcM  \right)  u = \mcF.
	\end{split}
\end{equation}
This is the equation in the equivalent problems
with the choice
\begin{equation}\nonumber 
	\begin{split}
		\mcU = \mcM_{V^{k-1}}^{-1}.
	\end{split}
\end{equation}


\section{Some aspects before numerical experiments} \label{sec_6}

In this sections, 
we consider some problems that are involved in numerical experiments.

\subsection{How to solve the equivalent problems}

The system of the equation \eqref{Hodge_matrix}
is a full matrix
as the term $ \mcM_{V^{k-1}}^{-1} $.
Because $ \mcM_{V^{k-1}} $ is a mass matrix,
it and its inverse usually have quite good conditions.
If we choose a $ \mcU $ that has similar spectrum to $ \mcM_{V^{k-1}} $,
the distribution of the spectrum of the new equation 
can be roughly similar to the equation \eqref{Hodge_matrix}.
If the such $ \mcU $ is sparse enough,
the system of the new equation become sparse.
Then many iterative methods and preconditioning techniques
can be easily applied to them.
When constructing the equivalent problems for the matrix systems
in the previous sections,
we have only one assumption on $ \mcU $, 
i.e.  $ \mcU $ is SPD.
In our paper \cite{Aux_iter},
the choice $ \mcU = \alpha \mcI $,
where $ \alpha $ is proper positive number 
and $ \mcI \in \mbC^{M \times M} $ is the identity matrix,
is efficient in solving many problems.
In \cite{Aux_iter},
We also use multigrid method and ILU-type preconditioners 
to accelerate their convergences of the following 
linear equation and eigenvalue problem:
\begin{align} 
	\left( \mcA + \mcB\mcU\mcB^T + c\mcM\right) u &= \mcF, \label{linear_AB}\\
	\left( \mcA + \mcB\mcU\mcB^T\right)  u &= \lambda \mcM u.\label{eig_AB}
\end{align}

For the eigenvalue problem,
we recommend using the 
Locally Optimal Block Preconditioned Conjugate Gradient Method (LOBPCG) \cite{knyazev2001toward}.
In the equivalent problems \eqref{equiv_H1_all_g1} and \eqref{equiv_H1_2_g1},
we need compute a complete base for the subspace $ \mbC_0 $,
which are the eigenvectors of the zero eigenvalue of \eqref{eig_AB}.
If $ \dim \mbC_0 >1 $,
zero is a multiple eigenvalue of \eqref{eig_AB}.
The LOBPCG can guarantee the entirety of the eigenspace of a multiple eigenvalue.

It also involves a type of equations in the case $ \dim \mbC_0 \not = 0 $:
\begin{equation}\nonumber 
	\begin{split}
		\left( \mcA + \mcB\mcU\mcB^T + \mcM\mcH\mcH^T\mcM \right) u = \mcF.
	\end{split}
\end{equation}
By the following theorem, 
the preconditioner for \eqref{linear_AB} can be also used to this problem.
\begin{theorem}\label{ILU_H1}
	\begin{equation}\nonumber 
		\begin{split}
			\sigma \left( \left( \mcA + \mcB\mcU\mcB^T + \mcM \right)^{-1}
			\left( \mcA + \mcB\mcU\mcB^T + \mcM\mcH\mcH^T\mcM \right) \right) 
			\subset \left[ 1 - \frac{1}{1+ \lambda_{min}} \;,\; 1 \right].
		\end{split}
	\end{equation}
	Here $ \sigma(\cdot) $ denotes the spectrum of a matrix
	and
	$ \lambda_{min} $ is the minimum nonzero eigenvalue of 
	$ \left( \mcA + \mcB\mcU\mcB^T  \right) u = \lambda \mcM u $.
\end{theorem}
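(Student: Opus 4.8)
The plan is to reduce the claim to a generalized eigenvalue problem and then diagonalize it against the $\mcM$-decomposition. First note that both $\mcA + \mcB\mcU\mcB^T + \mcM\mcH\mcH^T\mcM$ and $\mcA + \mcB\mcU\mcB^T + \mcM$ are Hermitian, and the second is positive definite, being the sum of the positive definite matrix $\mcM$ and two positive semi-definite matrices; hence $(\mcA + \mcB\mcU\mcB^T + \mcM)^{-1}(\mcA + \mcB\mcU\mcB^T + \mcM\mcH\mcH^T\mcM)$ is self-adjoint with respect to the $(\mcA + \mcB\mcU\mcB^T + \mcM)$-inner product, its spectrum is real, and it equals the set of generalized eigenvalues $\mu$ of
\begin{equation}\nonumber
\left( \mcA + \mcB\mcU\mcB^T + \mcM\mcH\mcH^T\mcM \right) x = \mu \left( \mcA + \mcB\mcU\mcB^T + \mcM \right) x.
\end{equation}

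Next I would invoke the $\mcM$-orthogonal decomposition $\mbC^N = \mbC_0 \oplus_{\mcM} \mbC_1 \oplus_{\mcM} \mbC_2$ in \eqref{decomp_M}: by Theorem \ref{invariant} each of $\mcA$, $\mcB\mcU\mcB^T$ and $\mcM\mcH\mcH^T\mcM$ sends every $\mbC_i$ into $\mcM\mbC_i$ (or to $\{0\}$), so $\mcM^{-1}$ composed with any of them preserves the splitting, and the generalized eigenvalue problem decouples over $\mbC_0$, $\mbC_1$, $\mbC_2$. On $\mbC_0$ both the numerator and the denominator act as $\mcM$ (using \eqref{H_pairs} for $\mcM\mcH\mcH^T\mcM$ and the vanishing of $\mcA$ and $\mcB\mcU\mcB^T$ there), giving $\mu = 1$. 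On $\mbC_1$, for an eigenpair $(\lambda^{(1)}_i, \mfu^{(1)}_i)$ of \eqref{A_eig} one has $\mcB\mcU\mcB^T\mfu^{(1)}_i = 0$ and $\mcM\mcH\mcH^T\mcM\mfu^{(1)}_i = 0$, so the numerator yields $\lambda^{(1)}_i \mcM\mfu^{(1)}_i$ and the denominator $(\lambda^{(1)}_i + 1)\mcM\mfu^{(1)}_i$, whence $\mu = \lambda^{(1)}_i/(1+\lambda^{(1)}_i)$. The identical computation on $\mbC_2$ with \eqref{B_eig} gives $\mu = \lambda^{(2)}_i/(1+\lambda^{(2)}_i)$.

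Finally I would note that on $\mbC_1$ the operator $\mcA + \mcB\mcU\mcB^T$ acts as $\mcA$ and on $\mbC_2$ as $\mcB\mcU\mcB^T$, so $\{\lambda^{(1)}_i\} \cup \{\lambda^{(2)}_i\}$ is precisely the set of nonzero eigenvalues of $(\mcA + \mcB\mcU\mcB^T)u = \lambda\mcM u$, and $\lambda_{min} = \min_i\{\lambda^{(1)}_i,\lambda^{(2)}_i\} > 0$. Since $t \mapsto t/(1+t) = 1 - 1/(1+t)$ is strictly increasing on $(0,\infty)$ and bounded above by $1$, each $\mu$ arising from $\mbC_1$ or $\mbC_2$ lies in $[\,1 - 1/(1+\lambda_{min})\,,\,1)$, while the $\mbC_0$-eigenvalue equals $1$; this is exactly the asserted inclusion, and it remains valid (the endpoint $1$ simply dropping out) in the degenerate case $\dim\mbC_0 = 0$.

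The computation itself is routine; the one step demanding care is the block decoupling, i.e. verifying that all three matrices respect the $\mcM$-decomposition and that \eqref{decomp_M} exhausts $\mbC^N$, both of which are supplied by Theorem \ref{invariant}. I do not expect a genuine obstacle beyond bookkeeping.
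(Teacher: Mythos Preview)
Your proposal is correct and follows essentially the same route as the paper: both proofs diagonalize the pencil over the common eigenbasis furnished by the $\mcM$-orthogonal splitting $\mbC^N = \mbC_0 \oplus_{\mcM} \mbC_1 \oplus_{\mcM} \mbC_2$ and read off the eigenvalues $1$ on $\mbC_0$ and $\lambda/(1+\lambda)$ on $\mbC_1\cup\mbC_2$. You are in fact a bit more explicit than the paper in two places---you justify why the spectrum is real via the generalized-eigenvalue reformulation, and you spell out the monotonicity of $t\mapsto t/(1+t)$ to extract the interval bound---whereas the paper simply computes the eigenvalues and leaves those closing observations implicit.
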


\begin{proof}
	
	The eigenvectors in the three eigenpair sets \eqref{A_eigenpairs},
	\eqref{B_pairs} and \eqref{H_pairs}
	form a base for the space $ \mbC^N $.
	It is enough use the vectors in the three sets to prove the theorem.
	
	Letting $ \left(1, \mfu^{(0)}_i \right)  $ be a pair in the set \eqref{H_pairs},
	we have
	\begin{equation}\nonumber 
		\begin{split}
			\left( \mcA + \mcB\mcU\mcB^T + \mcM \right)\mfu^{(0)}_i &= \mcM \mfu^{(0)}_i,\\
			\left( \mcA + \mcB\mcU\mcB^T + \mcM\mcH\mcH^T\mcM \right) \mfu^{(0)}_i &= \mcM \mfu^{(0)}_i.
		\end{split}
	\end{equation}
	Then we obtain
	\begin{equation}\nonumber 
		\begin{split}
			&\left( \mcA + \mcB\mcU\mcB^T + \mcM \right)^{-1}
			\left( \mcA + \mcB\mcU\mcB^T + \mcM\mcH\mcH^T\mcM \right) \mfu^{(0)}_i\\
			&= \left( \mcA + \mcB\mcU\mcB^T + \mcM \right)^{-1} \mcM \mfu^{(0)}_i\\
			& = \mfu^{(0)}_i.
		\end{split}
	\end{equation}
	Letting $ \left(\lambda^{(1)}_i,\mfu^{(1)}_i \right) $ 
	be an eigenpair in the set \eqref{A_eigenpairs}, we have
	\begin{equation}\nonumber 
		\begin{split}
			\left( \mcA + \mcB\mcU\mcB^T + \mcM \right)\mfu^{(1)}_i 
			&= \left(\lambda^{(1)}_i +1 \right) \mcM \mfu^{(1)}_i,\\
			\left( \mcA + \mcB\mcU\mcB^T + \mcM\mcH\mcH^T\mcM \right) \mfu^{(1)}_i
			&= \lambda^{(1)}_i\mcM \mfu^{(1)}_i.
		\end{split}
	\end{equation}
	Then we have
	\begin{equation}\nonumber 
		\begin{split}
			&\left( \mcA + \mcB\mcU\mcB^T + \mcM \right)^{-1}
			\left( \mcA + \mcB\mcU\mcB^T + \mcM\mcH\mcH^T\mcM \right) \mfu^{(1)}_i\\
			=\;&  \lambda^{(1)}_i\left( \mcA + \mcB\mcU\mcB^T + \mcM \right)^{-1} \mcM \mfu^{(1)}_i\\
			=\;& \frac{\lambda^{(1)}_i}{1+ \lambda^{(1)}_i} \mfu^{(1)}_i.
		\end{split}
	\end{equation}
	Similarly, letting $ \left(\lambda^{(2)}_i, \mfu^{(2)}_i \right) $ 
	be an eigenpair in the set \eqref{B_pairs}, we have
	\begin{equation}\nonumber 
		\begin{split}
			\left( \mcA + \mcB\mcU\mcB^T + \mcM \right)^{-1}
			\left( \mcA + \mcB\mcU\mcB^T + \mcM\mcH\mcH^T\mcM \right) \mfu^{(2)}_i
			= \frac{\lambda^{(2)}_i}{1+ \lambda^{(2)}_i} \mfu^{(2)}_i.
		\end{split}
	\end{equation}
\end{proof}

\subsection{The measure of the errors}\label{sec_6_2}

To measure the convergence when using iterative methods,
we use the relative error
\begin{equation}\label{mixd_error}
	\begin{split}
		\frac{\nm{\mcF - \mcB p -\left( \mcA + c\mcM\right) u} + \nm{ \mcG -\mcB^T u}}
		{\nm{\mcF} + \nm{\mcG}}
	\end{split}
\end{equation}
in the last equation of these equivalent problems.
This error involves 
the solution $ p \in \mbC^M $ in the system 
\eqref{mixed_matrix_g0} or \eqref{mixed_matrix_g1}. 
In the case that $ \Ker \mcB $ in Theorem \ref{CM_decomp} does not vanish
or the columns of $ \mcB $ are not full-rank,
the solution $ p \in \mbC^M $ is not unique.
We do not compute $ p $ in these equivalent problems.
However, the term $ \mcB p $ is unique
and it is the component of $ \mcF - (\mcA + c\mcM)u $ 
in the subspace $ \mcM \mbC_2 $.
As the settings in the equivalent problems 
\eqref{equiv_H0_all_g1}, \eqref{equiv_H0_c1_g1},
\eqref{equiv_H1_all_g1} and \eqref{equiv_H1_2_g1},
$ u_g \in \mbC_2 $ is the component of $ u $ in $ \mbC_2 $,
then we have
\begin{equation}\nonumber 
	\begin{split}
		\mcB p = \mcM f^{(2)} - c\mcM u_g = \mcB\mcU\mcB^T \tilde u - c\mcM u_g,
	\end{split}
\end{equation}
where we also use the decomposition for $ \mcF $ \eqref{F_tilde_u} or \eqref{F_tilde_u_H1}.
Then we have $ \mcB p = \mcB\mcU\mcB^T \tilde u $ 
in the equivalent problems
\eqref{equiv_H0_all_g0}, \eqref{equiv_H0_c1_g0}, \eqref{equiv_H1_all_g0} 
and \eqref{equiv_H1_2_g0}
as $ u_g = 0 $ in the case $ \mcG = 0 $.

In the case $ \dim \mbC_0 = 0 $ and $ c=0 $,
we do not compute $ \tilde u $ and $ u_g $
in the equivalent problems 
\eqref{equiv_H0_c0_g0} and \eqref{equiv_H0_c0_2_g1}.
In this case, 
$ \mcB p = \mcM f^{(2)} = \mcB\mcU\mcB^T u^{(2)}$,
where $ u^{(2)} $ is the component in 
$ u_1 $ and $ u_2 $ of 
\eqref{u_1_H0} and \eqref{u_2_H0} or
\eqref{u_1_s} and \eqref{u_2}.
We do not compute this $ u^{(2)} $ explicitly,
but we can obtain it by the combination 
of $ u_1 $ and $ u_2 $ 
in \eqref{equiv_H0_c0_g0} and \eqref{equiv_H0_c0_2_g1}.:
\begin{equation}\nonumber 
	\begin{split}
		u^{(2)} = \frac{\alpha_1 \alpha_2}{\alpha_2 - \alpha_1}\left(u_1 - u_2 \right).
	\end{split}
\end{equation}

\subsection{The penalty method}

Let $ \mcS \in \mbC^{M \times M} $ be a matrix that is easy to invert.
For the equation
\begin{equation}\label{penalty_method_S}
	\begin{split}
		\left( \mcA + c \mcM + 
		{\varepsilon} \mcB \mcS^{-1} \mcB^T \right)  u_{\varepsilon} 
		= \mcF + {\varepsilon} \mcB \mcS^{-1} \mcG,
	\end{split}
\end{equation}
its solution $ u_\varepsilon $ tend to the solution $ u $ 
on the system \eqref{mixed_matrix} 
as the penalty parameter $ \varepsilon $ goes to infinity,
i.e.
\begin{equation}\nonumber 
	\begin{split}
		u_{\varepsilon} \to u  \text{  as  } \varepsilon \to \infty.
	\end{split}
\end{equation}
This method is widely used to solving the constrained problems.

The penalty parameter can not be arbitrarily large
in actual computations because of the limit of computer precision.
Consequently, 
the error of the penalty solution $ u_\varepsilon $
can not be arbitrarily small.
Furthermore, the large parameters results that 
the condition of the system \eqref{penalty_method_S} becomes bad. 
In the numerical experiments, 
we will compare the solutions of the penalty method and the exact solutions.

\subsection{The inconsistent initial data $ \mcG $}

In actual applications,
if the columns of $ \mcB $ are not full-rank,
it is possible that
the data $ \mcG $ does not satisfy 
the basic requirement $ \mcG \in \IM \mcB^T $.
This results in that
there does not exist a $ u\in \mbC^N $ such that $ \mcB^T u = \mcG $.
By Theorem \ref{CM_decomp},
there is the orthogonal decomposition for $ \mbC^M $:
\begin{equation}\nonumber 
	\begin{split}
		\mbC^M =  \IM \mcB^T \oplus \Ker \mcB .
	\end{split}
\end{equation}
Then the $ \mcG $ can be divided into two orthogonal parts:
\begin{equation}\label{G_decomp}
	\begin{split}
		\mcG = \mcG_c + \mcG_i,
	\end{split}
\end{equation}
where $ \mcG_c \in \IM \mcB^T $ and $ \mcG_i \in \Ker \mcB $.
We call $ \mcG_c $ the consistent part of $ \mcG $
and $ \mcG_i $ the inconsistent part of $ \mcG $.
The inconsistent data can be introduced 
through some inevitable ways,
for instance, numerical quadrature, data collection, machine precision and so on.
Of course, 
there is no solution mathematically
when the inconsistent data $ \mcG_i \not = 0  $.
However, the solution for the consistent part $ \mcG_c $
may be still meaningful,
especially when the consistent part $ \mcG_c $ is the dominant part of $ \mcG $.

As the discussions in Section \ref{sec_3} and \ref{sec_4},
the linear equations in the equivalent problems that we construct
are all well-posed.
What will happen if we still use these equivalent problems
to compute the solution $ u\in \mbC^N $ 
in the system \eqref{mixed_matrix_g1}
in this case $ \mcG \not \in \IM \mcB^T $?

We choose
\begin{equation}\label{U_aI}
	\begin{split}
		\mcU = \alpha \mcI,
	\end{split}
\end{equation}
where $ \alpha$ is a positive number 
and $ \mcI \in \mbR^{M \times M} $ is the identity matrix.
The only place that involves $ \mcG $ in these equivalent problems
is the right hand side $ \mcB \mcU \mcG $.
If $ \mcU $ is chosen as \eqref{U_aI},
we have
\begin{equation}\label{BG_c}
	\begin{split}
		\mcB \mcU \mcG = \alpha \mcB \left( \mcG_c + \mcG_i \right) 
		= \alpha \mcB \mcG_c  \equiv \mcB \mcU \mcG_c.
	\end{split}
\end{equation}
By this result,
we know that only the consistent part $ \mcG_c $ in $ \mcG $
has effect on the solutions of these equivalent problems.
Then the solution $ u\in\mbC^N $ in these equivalent problems
with the choice \eqref{U_aI} for $ \mcU $ 
is the solution of the system
\begin{equation}\label{mixed_matrix_Gc}
	\begin{split}
		(\mcA + c\mcM)u + \mcB p &= \mcF \\
		\mcB^T u &= \mcG_c.
	\end{split}
\end{equation}

In this situation, 
we can not use the error \eqref{mixd_error} to measure the convergence 
of these equivalent problems
as the term $ \nm{ \mcG -\mcB^T u} $ in \eqref{mixd_error}
never converge to zero 
because of the inconsistent component $ \mcG_i \not = 0 $.
It seems that we should replace this term by $ \nm{ \mcG_c -\mcB^T u} $.
However, we do not compute the explicit $ \mcG_c $.
By Theorem \ref{BTGU},
for $ \mcG_c \in \IM \mcB^T $,
$ \mcB^T u = \mcG_c $ is equivalent to
$ \alpha \mcB \mcB^T u = \alpha\mcB \mcG_c \equiv \alpha\mcB \mcG $
with  choice \eqref{U_aI} for $ \mcU $.
Then in this case,
the error $ \nm{ \mcG_c -\mcB^T u} $ 
can be replaced by
\begin{equation}\nonumber 
	\begin{split}
		\alpha \nm{ \mcB \mcB^T u - \mcB  \mcG_c}
		\equiv \alpha \nm{ \mcB \mcB^T u - \mcB \mcG}.
	\end{split}
\end{equation}
Then we obtain an error to measure the convergence in this situation:
\begin{equation}\nonumber 
	\begin{split}
		\frac{\nm{\mcF - \mcB p -\left( \mcA + c\mcM\right)  u} + \alpha\nm{ \mcB\mcG - \mcB\mcB^T u}}
		{\nm{\mcF} + \nm{\mcG}}.
	\end{split}
\end{equation}

For the penalty method \eqref{penalty_method_S},
we choose $ \mcS = \mcI $ when the $ \mcG $ is inconsistent.
In this case, the penalty method \eqref{penalty_method_S} becomes
\begin{equation}\label{penalty_vare}
	\begin{split}
		\left( \mcA + c \mcM + 
		\varepsilon \mcB \mcB^T \right)  u_{\varepsilon} 
		= \mcF + \varepsilon \mcB \mcG
	\end{split}
\end{equation}
The solution $ u_\varepsilon $ also tends to 
the solution $ u\in\mbC^N $ in the system \eqref{mixed_matrix_Gc}
as the penalty parameter $ \varepsilon $ goes to infinity.


\section{Numerical experiments}\label{sec_7}

In this section,
we take the constrained problems 
of the $ \nabla \times \nabla \times $ and $ \nabla \left( \nabla \cdot\right) $
operators as examples to verify the equivalent problems 
that we constructed in the previous sections.
The two operators are the  $ k=1 $ and $ k=2 $ forms of the $ d^* d $ operator 
on the $ \mbR^3 $ complex, respectively:
\begin{equation}\nonumber 
	\begin{split}
		\xymatrix{
			0 \ar[r] 
			& H^1(\Omega) \ar[r]^{\nabla}        \ar[d]^{\Pi^Q_h}
			& \vH(\textcurl,\Omega) \ar[r]^{\nabla \times} \ar[d]^{\Pi^\vE_h}
			& \vH(\textdive,\Omega) \ar[r]^{\nabla \cdot}  \ar[d]^{\Pi^\vF_h}
			& L^2(\Omega) \ar[r]                  \ar[d]^{\Pi^S_h}
			& 0\\
			0\ar[r] 
			& Q_h \ar[r]^{\nabla} 
			& \vE_h \ar[r]^{\nabla \times} 
			& \vF_h \ar[r]^{\nabla \cdot}	
			& S_h \ar[r]
			& 0.
		}
	\end{split}
\end{equation}
Here the finite element spaces are the first family 
of \Nedelec element spaces \cite{nedelec1980mixed}.
The node element space $ Q_h $ 
and the edge element space $ \vE_h $ 
are used to discretize the constrained Maxwell problems
\begin{equation}\nonumber 
	\begin{split}
		\nabla \times \nabla \times \vu + c \vu + \nabla p &= \vf, \\
		\nabla \cdot \vu &= \vg,
	\end{split}
\end{equation}
while $ \vE_h $ and the face element space $ \vF_h $ 
are used for the constrained grad-div problems
\begin{equation}\nonumber 
	\begin{split}
		-\nabla ( \nabla \cdot \vu) + c \vu + \nabla \times  \vp &= \vf, \\
		\nabla \times \vu &= \vg.
	\end{split}
\end{equation}
We take three different three-dimensional domains
$ \Omega_1 $, $ \Omega_2 $ and $ \Omega_3 $, shown in Figure \ref{Domains}.
The first one is a cube
$ \left[ 0 \;,\; \pi \right]^3 $.
The second is the same cube but with a tunnel
$ \left[ \frac{\pi}{4} \;,\; \frac{3\pi}{4} \right]^2 \times \left[ 0 \;,\; \pi \right] $.
The third is the cube with a void
$  \left[ \frac{\pi}{4} \;,\; \frac{3\pi}{4} \right]^3 $ inside.
\begin{figure}
	\centering
	\begin{minipage}{4cm}
		\includegraphics[height=4.5cm,width=5cm]{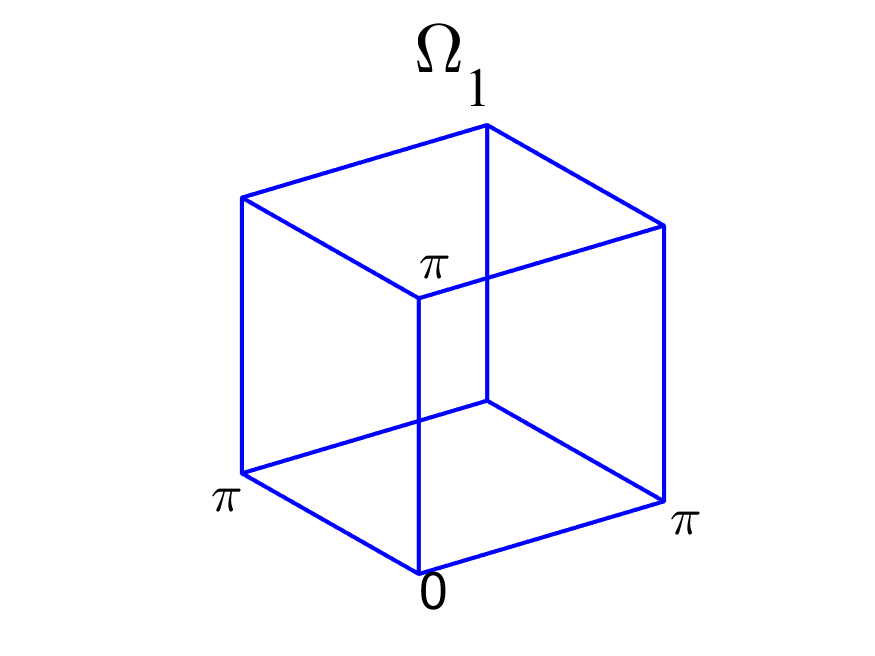}
	\end{minipage}
	\begin{minipage}{4cm}
		\includegraphics[height=4.5cm,width=5cm]{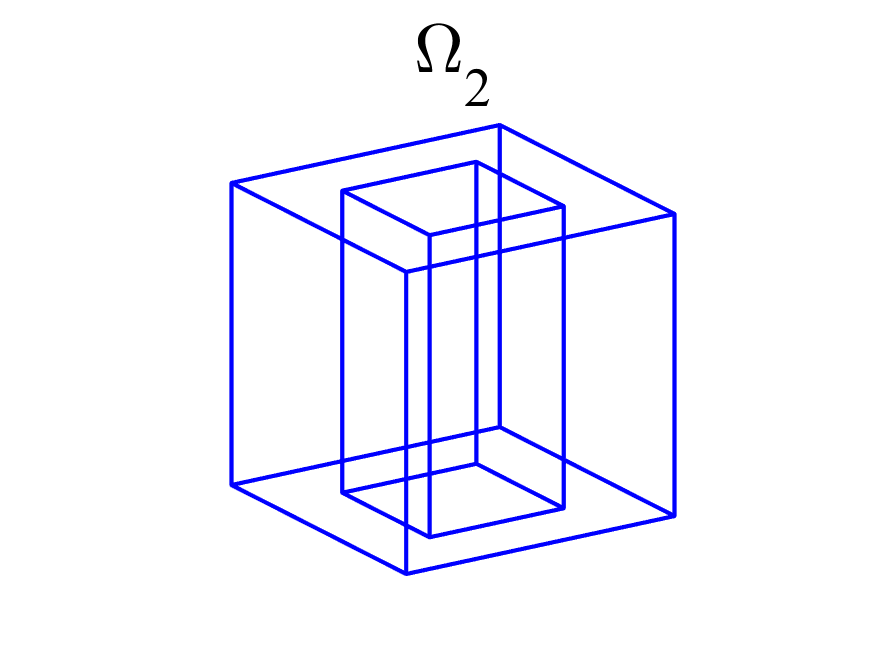}
	\end{minipage}
	\begin{minipage}{4cm}
		\includegraphics[height=4.5cm,width=5cm]{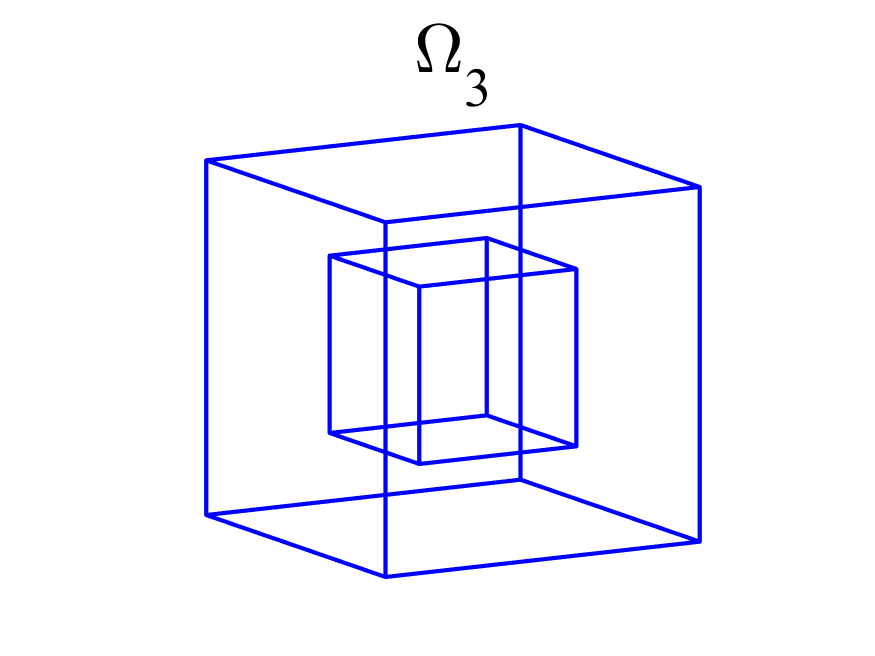}
	\end{minipage}
	\caption{The three 3D domains. $ \Omega_1 $ is a cube $ \left[ 0 \;,\; \pi \right]^3 $.
		$ \Omega_2 $ is the cube with a tunnel
		$ \left[ \frac{\pi}{4} \;,\; \frac{3\pi}{4} \right]^2 \times \left[ 0 \;,\; \pi \right] $.
		$ \Omega_3 $ is the cube with a void
		$  \left[ \frac{\pi}{4} \;,\; \frac{3\pi}{4} \right]^3 $ inside.}
	\label{Domains}
\end{figure}
The last two domains result in nontrivial $ \mbC_0 $ for the two constrained problems,
respectively.

We use uniformly cubic mesh for all the problems.
The mesh size is set to $ h = \frac{\pi}{32} $ uniformly.
We use the first order \Nedelec to construct 
the matrix system of the discrete weak formulation \eqref{mixed_d_dis}.
The matrix $ \mcU $ is chosen as in \cite{Aux_iter}:
\begin{equation}\nonumber 
	\begin{split}
		\mcU = \frac{5}{h^3}\mcI,
	\end{split}
\end{equation}
where $ \mcI \in \mbR^{M\times M} $ is the identity matrix.
We use the Preconditioned Conjugate Gradient method (PCG)
to compute the linear systems
and use the LOBPCG method
to compute the eigenvalue problem 
in the equivalent problems.
We use the ILU(0) preconditioner to accelerate the algorithms.
By Theorem \ref{ILU_H1},
for the equation with the term $ \mcM \mcH\mcH^T \mcM $,
the ILU(0) factors are generated by the sparse matrix 
$ \mcA + \mcB \mcU \mcB^T + \mcM $.
We take random data for the exact solution $ u $ and $ p $
to generate the right hand side $ \mcF $ and $ \mcG $.
We use the error \eqref{mixd_error} to measure the convergence 
of the last equation in the equivalent problems
and use relative errors for the other equations and the eigenvalue problem.
The stopping criterion for the last equation is set to $ 10^{-10} $.
As the last equation depends on the solution of other equations,
the stopping criteria for other equations and eigenvalue problems
should be smaller than the last one.
We set them to $ 10^{-11} $.

\subsection{Example 1}

The first numerical example is the mixed Maxwell problem 
with Dirichlet boundary condition on the domain $ \Omega_1 $:
\begin{equation}\label{example_equ_1}
	\begin{split}
		\nabla \times \nabla \times \vu + \nabla  \vp 
		&= \vf \qq \;\text{in}\; \qq \Omega_1, \\
		\nabla \cdot \vu 
		&= \vg \qq \,\text{in}\; \qq \Omega_1,\\
		\vn \times \vu 
		&= 0 \qq\, \text{on} \qq \partial\Omega_1,\\
		\vp &= 0 \qq \,\text{on} \qq \partial\Omega_1.\\
	\end{split}
\end{equation}
The sizes of the matrices in its discrete system
are $ N = 92256 $ and $ M = 29791 $.
In this problem,
the matrix $ \mcB $ are full-rank.
We can use the direct method to solve the exact solution of its matrix system.
We also use the direct method to solve the solution
of the penalty method \eqref{penalty_vare}.
Figure \ref{penalty_compare} shows the the relative error of 
the penalty solution $ u_\varepsilon $ 
compared with the exact solution.
The error of $ u_\varepsilon $ goes down 
with the penalty parameter $ \varepsilon $ becoming large at first.
After some point,
the error goes up with $ \varepsilon $ continuing becoming large.
The reason of this phenomenon is that
the machine precision is limited 
and the parameter is too large.
\begin{figure}[!htb]
	\begin{center}
		\includegraphics[width=3in]{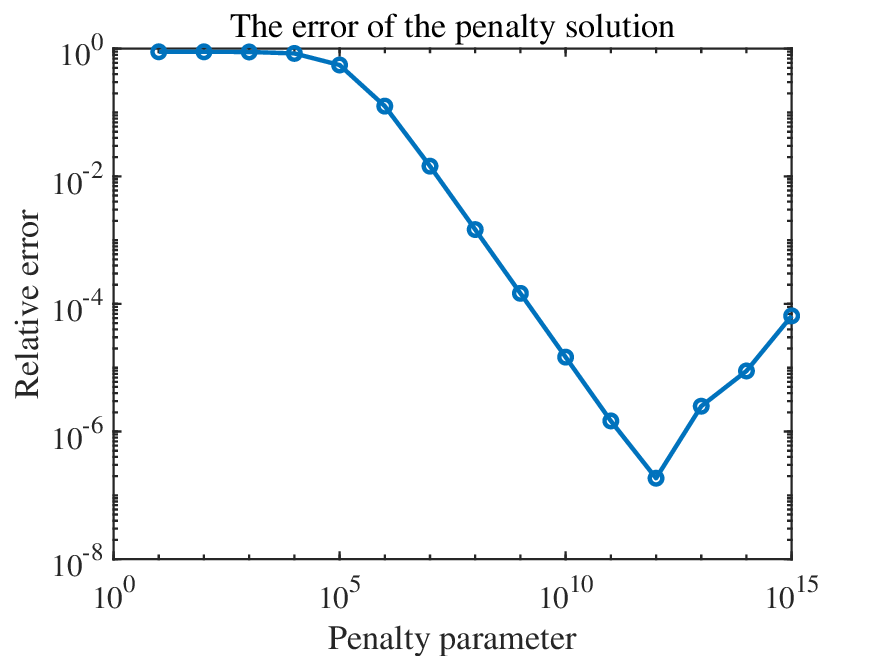}
	\end{center}
	\caption{The relative error of the solution $ u_\varepsilon $
		of penalty method \eqref{penalty_vare} 
		for the discrete problem of \eqref{example_equ_1}
		with the exact solution.}
	\label{penalty_compare}
\end{figure}

As $ \dim \mbC_0 = 0 $ and $ c = 0 $ in this problem,
we use the equivalent problems 
\eqref{equiv_H0_c0_1_g1} and \eqref{equiv_H0_c0_2_g1} 
to solve its matrix system, respectively.
When using the equivalent problem \eqref{equiv_H0_c0_2_g1},
we use the simultaneous iteration for the two equations.
The error is measured after each iteration.
Figure \ref{example_1} shows the convergence histories
in solving the equivalent problems with PCG.
The results illustrate that 
the ILU(0) preconditioner reduces the iteration counts.
\begin{figure}
	\centering
	\begin{minipage}{6cm}
		\includegraphics[height=5cm,width=6.0cm]{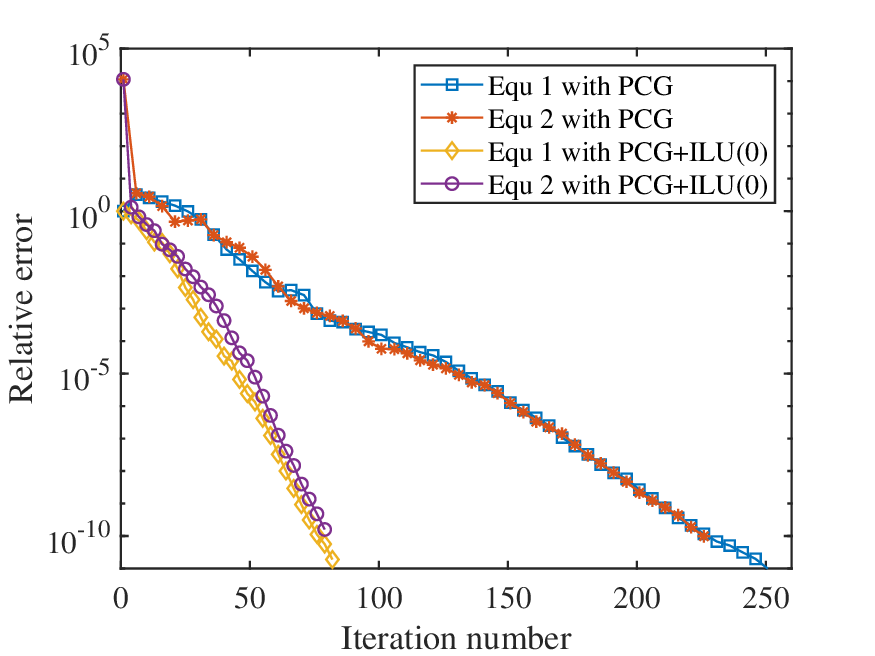}
	\end{minipage}
	\begin{minipage}{6cm}
		\includegraphics[height=5cm,width=6.0cm]{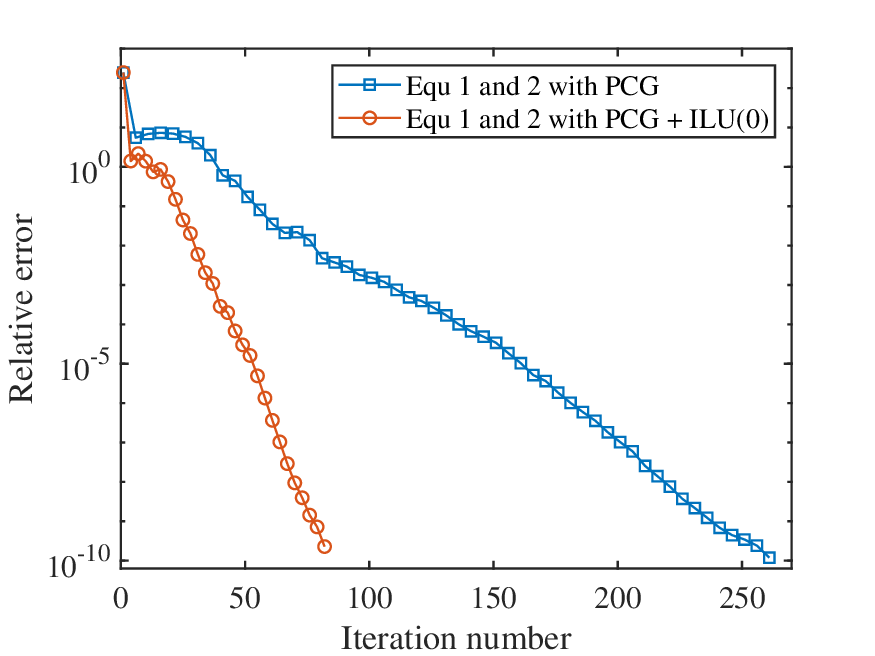}
	\end{minipage}
	\caption{The convergence histories of the equivalent problem 
		\eqref{equiv_H0_c0_1_g1} (left) and \eqref{equiv_H0_c0_2_g1} (right)
		in solving the discrete problems of the equation \eqref{example_equ_1}.}
	\label{example_1}
\end{figure}


\subsection{Example 2}

The second example is also the mixed Maxwell equation on the domian $ \Omega_1 $
but with Neumann boundary condition:
\begin{equation}\label{example_equ_2}
	\begin{split}
		\nabla \times \nabla \times \vu  + \nabla  \vp 
		&= \vf \qq \;\text{in}\; \qq \Omega_1, \\
		\nabla \cdot \vu 
		&= \vg \qq \, \text{in}\; \qq \Omega_1,\\
		\vn \times (\nabla \times \vu) 
		&= 0 \qq\, \text{on} \qq \partial\Omega_1,\\
		\vn \cdot \nabla\vp &= 0 \,\qq \text{on} \qq \partial\Omega_1.\\
	\end{split}
\end{equation}
The sizes of the matrices in its discrete system
are $ N = 104544 $ and $ M = 35937 $.
In this example,
it is still that
$ \dim \mbC_0 = 0 $ and $ c = 0 $ as the first example.
But the columns of $ \mcB $ are no longer full-rank.
This means that its matrix system is not invertible
and the direct method is invalid for the problem.
We use the equivalent problem 
\eqref{equiv_H0_c0_1_g1} and \eqref{equiv_H0_c0_2_g1} 
to solve this problem, respectively.
Figure \ref{example_2} shows the convergence histories
of PCG method with and without ILU(0) preconditioner.
\begin{figure}
	\centering
	\begin{minipage}{6cm}
		\includegraphics[height=5cm,width=6.0cm]{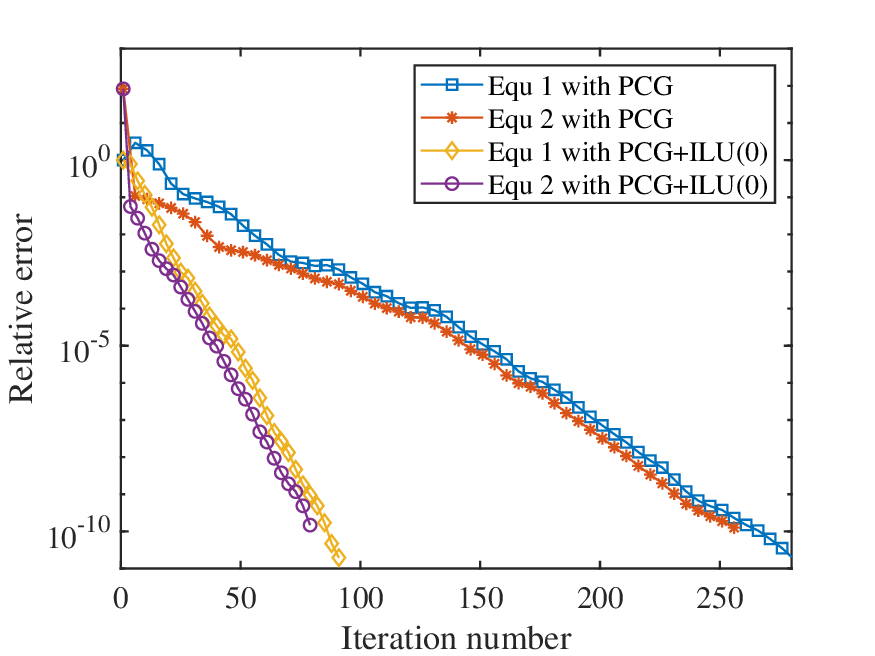}
	\end{minipage}
	\begin{minipage}{6cm}
		\includegraphics[height=5cm,width=6.0cm]{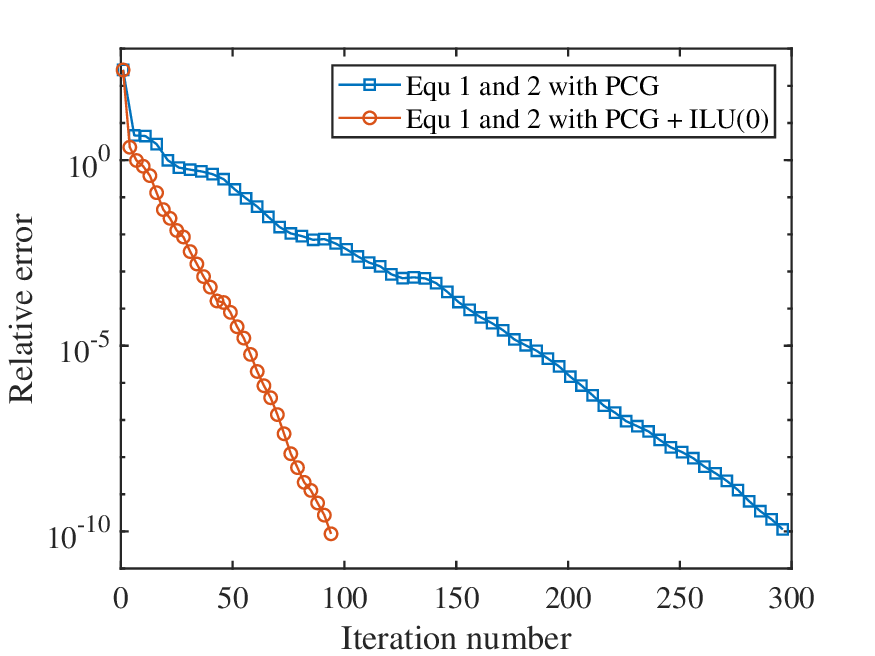}
	\end{minipage}
	\caption{The convergence histories of the equivalent problem 
		\eqref{equiv_H0_c0_1_g1} (left) and \eqref{equiv_H0_c0_2_g1} (right)
		in solving the discrete problems of the equation \eqref{example_equ_2}.}
	\label{example_2}
\end{figure}

\subsection{Example 3}

The third numerical example is the mixed Maxwell equation 
with Neumann boundary condition on the domian $ \Omega_2 $:
\begin{equation}\label{example_equ_3}
	\begin{split}
		\nabla \times \nabla \times \vu +  \vu + \nabla  \vp 
		&= \vf \qq \;\text{in}\, \qq \Omega_2, \\
		\nabla \cdot \vu 
		&= \vg \qq \,\text{in}\; \qq \Omega_2,\\
		\vn \times (\nabla \times \vu) 
		&= 0 \qq\, \text{on} \qq \partial\Omega_2,\\
		\vn \cdot \nabla\vp &= 0 \qq\, \text{on} \qq \partial\Omega_2.\\
	\end{split}
\end{equation}
The sizes of the matrices in its discrete system
are $ N = 81504 $ and $ M = 28512 $.
Because there is a tunnel in this domain,
the cohomology space of this form is not empty
and  $ \dim \mbC_0 = 1 $.
We use the equivalent problem \eqref{equiv_H1_all_g1}
to solve the matrix system of this problem.
Figure \ref{example_3} shows the convergence histories 
of the eigenvalue problem and linear systems in \eqref{equiv_H1_all_g1}.
\begin{figure}
	\centering
	\begin{minipage}{6cm}
		\includegraphics[height=5cm,width=6.0cm]{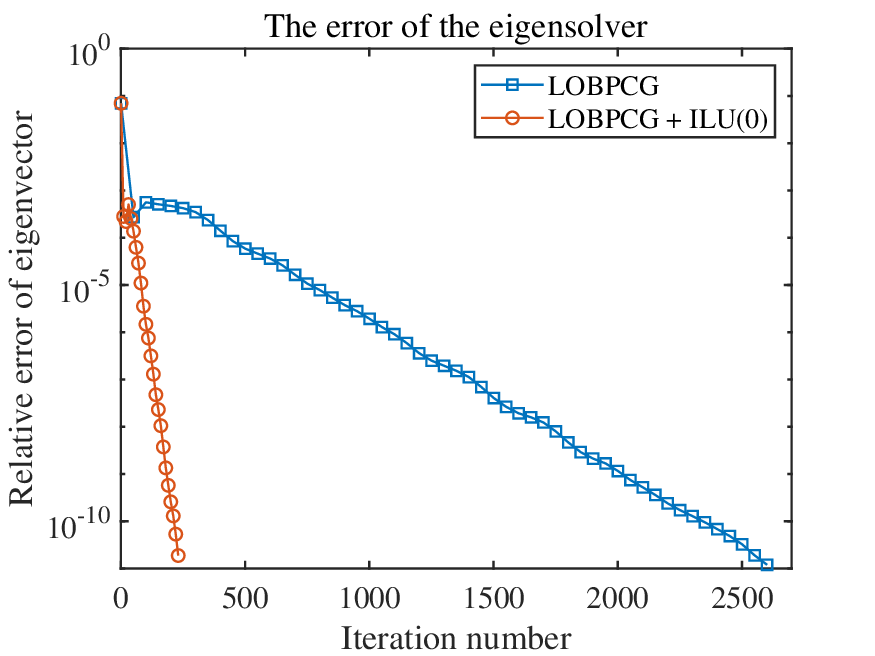}
	\end{minipage}
	\begin{minipage}{6cm}
		\includegraphics[height=5cm,width=6.0cm]{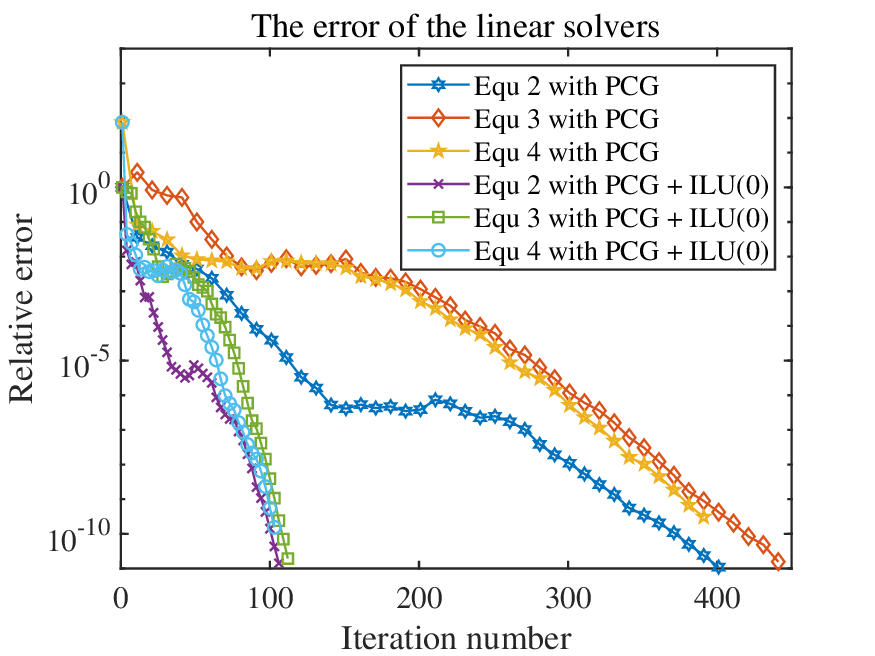}
	\end{minipage}
	\caption{The convergence histories of 
		the eigenvalue problem (left)
		and linear systems (right) of
		the equivalent problem \eqref{equiv_H1_all_g1}
		in solving the discrete problem of the equation \eqref{example_equ_3}.}
	\label{example_3}
\end{figure}

\subsection{Example 4}

The fourth example is the constrained grad-div problem on the domain $ \Omega_1 $:
\begin{equation}\label{example_equ_4}
	\begin{split}
		-\nabla ( \nabla \cdot \vu) + \nabla \times  \vp 
		&= \vf \qq \;\text{in}\; \qq \Omega_1, \\
		\nabla \times \vu 
		&= \vg \qq \,\text{in}\; \qq \Omega_1,\\
		\vn (\nabla \cdot \vu) 
		&= 0 \qq\, \text{on} \qq \partial\Omega_1,\\
		\vn \times (\nabla \times \vp) 
		&= 0 \qq\, \text{on} \qq \partial\Omega_1.\\
	\end{split}
\end{equation}
The sizes of the matrices in its discrete system
are $ N = 101376 $ and $ M = 104544 $.
In this problem, $ \dim \mbC_0 = 0 $ and $ c = 0 $.
We use the equivalent problem 
\eqref{equiv_H0_c0_1_g1} and \eqref{equiv_H0_c0_2_g1} 
to solve this problem, respectively.
Figure \ref{example_4} shows the convergence histories
of PCG method with and without ILU(0) preconditioner.
\begin{figure}
	\centering
	\begin{minipage}{6cm}
		\includegraphics[height=5cm,width=6.0cm]{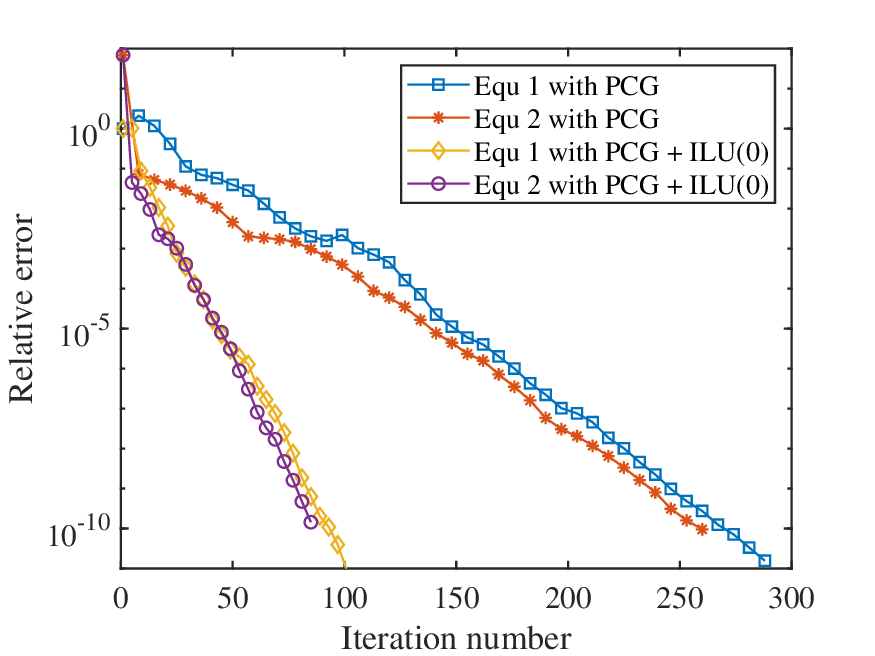}
	\end{minipage}
	\begin{minipage}{6cm}
		\includegraphics[height=5cm,width=6.0cm]{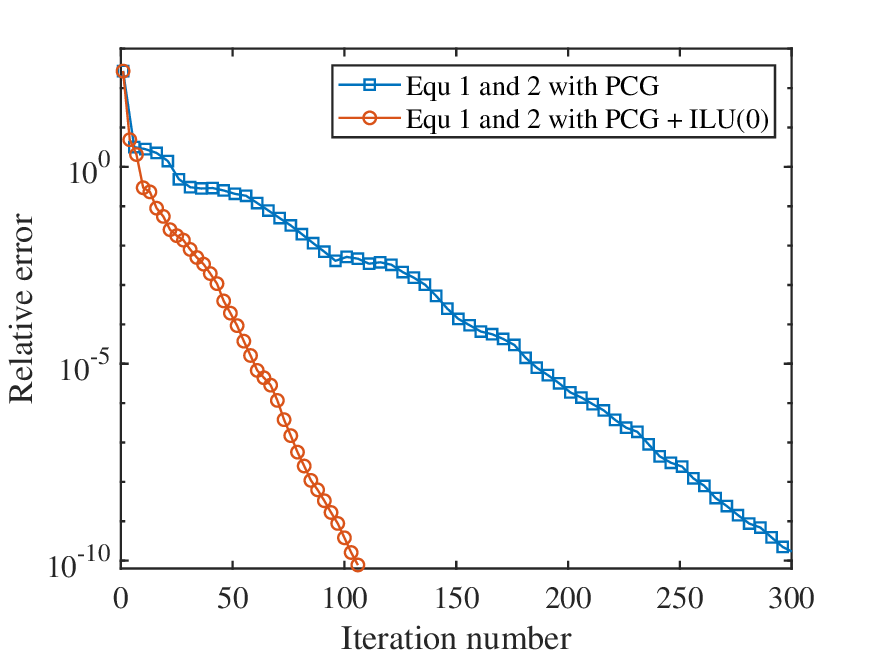}
	\end{minipage}
	\caption{The convergence histories of the equivalent problem 
		\eqref{equiv_H0_c0_1_g1} (left) and \eqref{equiv_H0_c0_2_g1} (right)
		in solving the discrete problems of the equation \eqref{example_equ_4}.}
	\label{example_4}
\end{figure}

\subsection{Example 5}

The fifth numerical example is the constrained grad-div problem 
on the domain $ \Omega_3 $:
\begin{equation}\label{example_equ_5}
	\begin{split}
		-\nabla ( \nabla \cdot \vu) + \vu + \nabla \times  \vp 
		&= \vf \qq \;\text{in}\; \qq \Omega_3, \\
		\nabla \times \vu 
		&= \vg \qq \,\text{in}\; \qq \Omega_3,\\
		\vn (\nabla \cdot \vu) 
		&= 0 \qq \,\text{on} \qq \partial\Omega_3,\\
		\vn \times (\nabla \times \vp) 
		&= 0 \qq \,\text{on} \qq \partial\Omega_3.\\
	\end{split}
\end{equation}
The sizes of the matrices in its discrete system
are $ N = 89856 $ and $ M = 93744 $.
Similar to the third example,
the cohomology space of this form is also not empty
and $ \dim \mbC_0 = 1 $
because of the void in this domain.
We use the equivalent problem \eqref{equiv_H1_all_g1}
to solve the matrix form of this problem.
Figure \ref{example_5} shows the convergence histories
of the eigenvalue problem and linear systems.

\begin{figure}
	\centering
	\begin{minipage}{6cm}
		\includegraphics[height=5cm,width=6.0cm]{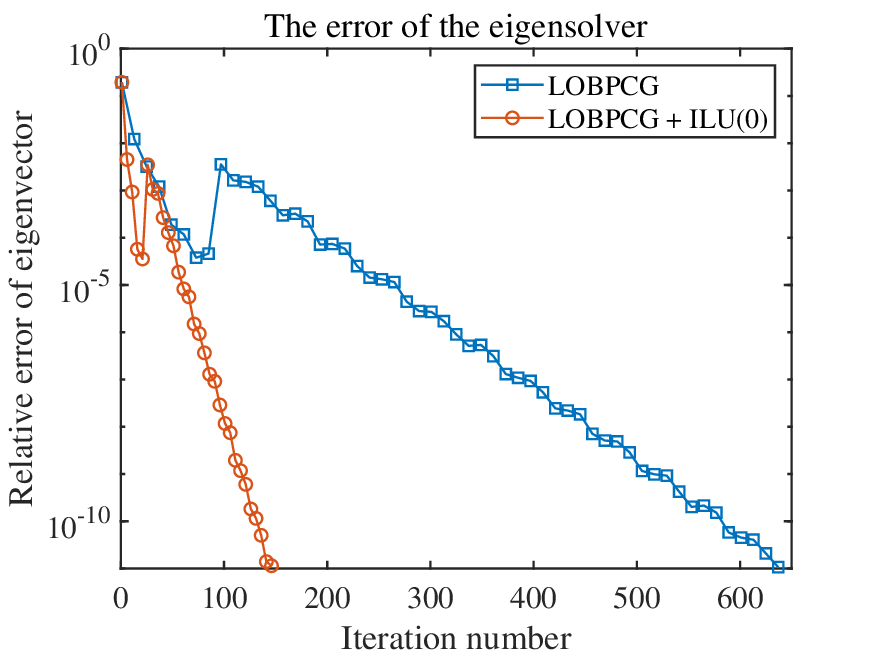}
	\end{minipage}
	\begin{minipage}{6cm}
		\includegraphics[height=5cm,width=6.0cm]{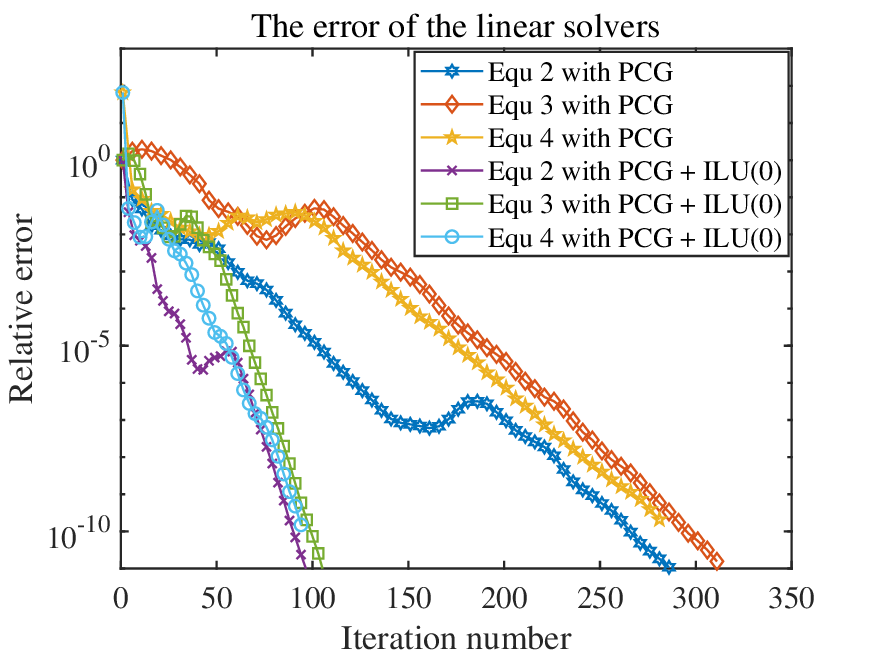}
	\end{minipage}
	\caption{The convergence histories of 
		the eigenvalue problem (left)
		and linear systems (right) of
		the equivalent problem \eqref{equiv_H1_all_g1}
		in solving the discrete problem of the equation \eqref{example_equ_5}.}
	\label{example_5}
\end{figure}

\section{Conclusions}\label{sec_8}

In this paper, 
we consider solving the discrete systems 
of the constrained problems on de Rham complex.
The first difficulty is the poor condition 
of the discrete systems.
Many existing iterative methods and preconditioning techniques
do not work for such systems.
The second difficulty is that 
the systems are non-invertible
in the case that  
the constraint terms do not satisfy 
inf-sup condition,
even if the desired component in the system is still unique.

We discretize the constrained problems
use the finite element complex.
We prove that the matrices in the algebraic system 
satisfy the property $ \mcA\mcM^{-1}\mcB = 0 $.
This property corresponds to the property $ d^2 =0 $ on complex.
This is the extra property of the matrix systems in this paper
compared with general constrained problems.
By this property,
we prove that the explicit Hodge decomposition of the right hand side
can be obtained through solving a Hodge Laplacian problem.
We construct several equivalent problems for 
the discrete systems.
No mater whether the constraint term satisfy
inf-sup condition or not,
only if the component $ \vu_h $ is unique,
we can solve it though some well-posed problems.
Furthermore,
the spectral distributions of the equations contained
in these equivalent problems are Laplace-like,
if the complex is Fredholm.
Then many existing iterative methods
and preconditioning techniques 
can be applied to solving them.
In our paper \cite{Aux_iter},
we have discussed how to solve
the linear equations and eigenvalue problems in these equivalent problems.
This make the large-scalar constrained discrete problem
become easy to solve.

We provide several numerical experiments on $ \mbR^3 $ complex
to verify the equivalent problems.
The numerical results show the capability and efficiency 
of the equivalent problems that we construct.

\

\

\

\textbf{Acknowledgement.} The author would like to thank Prof. Chao Zeng at Nankai University
for many useful discussions.

	
	%
	%
	
\bibliographystyle{plain} 
\bibliography{reference_constrain} 	
	
\end{document}